\newcommand{\modul}{\operatorname{mod}}
\newcommand{\vol}{\operatorname{vol}}		
\newcommand{\var}{\operatorname{Var}}	
\newcommand{\Ind}{\mathbb{I}}
\newcommand{\qq}{\mathbf{q}}	
\newcommand{\dd}{\mathbf{d}}
\newcommand{\whp}{{with high probability}}
\newcommand{\Whp}{{With high probability}}
\newcommand{\V}{\mathcal{V}}
\newcommand{\barV}{\mathcal{\tilde{V}}}
\newcommand{\W}{\mathcal{W}}
\newcommand{\Po}[1]{\textrm{Po}\left(#1\right)}
\newcommand{\Bin}[2]{\textrm{Bin}\left(#1,#2\right)}
\newcommand{\E}{\mathbb{E}}
\newcommand{\err}{\text{err}}
\newcommand{\Pra}[1]{\Pr\left(#1\right)}
\newcommand{\Gnmp}{\mathcal{G}\left(n,m,p\right)}
\newcommand{\hatGnmp}{\hat{\mathcal{G}}\left(n,m,p\right)}
\newcommand{\barGnmp}{\bar{\mathcal{G}}\left(n,m,p\right)}
\newcommand{\Gnp}{G(n,{\bf \bar p})}
\newcommand{\barp}{{\bf \bar p}}
\newcommand{\G}{\mathcal{G}}
\newcommand{\hatG}{\hat{\mathcal{G}}}
\newcommand{\eps}{\varepsilon}
\theoremstyle{plain}
\newtheorem{theorem}{Theorem}
\newtheorem{definition}[theorem]{Definition}
\newtheorem{lemma}[theorem]{Lemma}
\newtheorem{corollary}[theorem]{Corollary}
\newtheorem{remark}[theorem]{Remark}
\title{Modularity of random intersection graphs}
\author{Katarzyna Rybarczyk,\\ {\small\sl Adam Mickiewicz University, Pozna\'n}}
\date{}
\begin{document}
\maketitle	
\begin{abstract}
Modularity was introduced by Newman and Girvan in 2004 and is used as a measure of community structure of networks represented by graphs. In our work we study modularity of the random intersection graph model first considered  by Karo\'nski, Scheinerman, and Singer--Cohen in  1999. Since their introduction, random intersection graphs has attracted much attention, mostly due to their application as networks models. In our work we determine the range of 
parameters in which modularity detects well the community structure of the random intersection graphs, as well as give a range of parameters for which there is a community structure present but not revealed by modularity. We also relate modularity of the random intersection graph to the modularity of other known random graph models.  

\end{abstract}

{\bf keywords} Modularity, Random intersection graph, Random graphs, Affiliation networks, Complex networks.

\section{Introduction}\label{Introduction}

{\it Motivation. } 
Detecting and measuring the presence of community structure in the network are undoubtedly natural motivations for compelling research in the field of computer science. Mainly because identifying communities has a number of practical applications such as identifying the groups of common interests in social networks, classifying fake news, identifying proteins with the same biological functions and many others \cite{KaPrTh21,NeBook10}.
In this article we concentrate on the notion of measure of the presence of community structure in the network's graph called \textit{modularity}. Modularity was introduced by Newman and Girvan in 2004 \cite{NeBook10,NeGi04}. It relays on the idea that in the graph with a present community structure the vertex set might be  partitioned into subsets in which there are much more internal edges than we would expect by chance (see Definition~\ref{def:modularity}). Modularity is commonly used in community detection algorithms as a quality function judging the performance of the algorithms~\cite{KaPrTh21}, and also as a central ingredient of such algorithms, for example in Louvain algorithm \cite{BlGuLaLe08}, Leiden algorithm \cite{TrWaEc19} or Tel-Aviv algorithm \cite{GiSh_23}.

First theoretical results concerned modularity of deterministic graphs (see for example \cite{Bagrow_trees_12,Brandes2008,Majstorovic2014}). This line of research is still continued, see for example a recent result \cite{LaSu23} by Laso{\'n} and Sulkowska. However, in the context of studying large networks, it is crucial to determine the modularity of their natural models -- random graphs. Systematic and thorough studies on this topic have been undertaken by McDiarmid and Skerman in a series of articles \cite{McSk_reg_lattice_13,McDiSk2018RegTreelike,McDiarmidSkerman2020}. However, only recently has research begun on the modularity of these random graph models, which are considered to reflect the properties of complex networks. Here we should mention the results about modularity of the random graph on the hyperbolic plane 
  by Chellig, Fountoulakis, and Skerman \cite{ChFoSk21}, and those concerning the basic preferential attachment graph by  Prokhorenkova, Pra{\l}at, and Raigorodskii \cite{PrPrRa16_LNCS,PrPrRa17} and by Rybarczyk and Sulkowska \cite{RybSulk2025+}. 
 
 It is a plausible that many of the properties of complex networks are related to a known or hidden bipartite structure representing the relationships between network elements and their properties and the fact that alike elements (with common properties) tend to form connections \cite{GuillaumeLatapy2004Bipartite}. A theoretical model of a random graph based on a bipartite structure of vertices and attributes has been introduced by Karo\'nski, Schienerman, and Singer-Cohen in~\cite{KaronskiSchienerSinger1999Subgraph} and called the random intersection graph. The model was further generalised by Godehardt and Jaworski~\cite{GodehardtJaworski2003TwoModels}. Moreover, some other variants of graphs based on a bipartite structure have been studied since, for example \cite{BloznelisLaskela2021Superpositions,BloznelisKaronski2014RIGProcess,BloznelisLeskela2023SuperpositionsClust,HofstadKamjathyVadon2021RIG}. In the classical random intersection graphs, vertices choose their neighbours based on randomly assigned attributes they possess. It might be interpreted in such a manner that the more the elements of the network are alike, the more probable is that they are connected. For comprehensive surveys on the classical results concerning random intersection graphs we refer the reader to \cite{Bloznelis2015SurveyModels,Bloznelis2015SurveyProperties,Spirakis2013Survey}. It turns out that with well-chosen parameters, random intersection graphs have many properties of complex networks.
 To mention just a few, random intersection graphs are known for having tunable clustering and degrees that in some ranges of parameters have the power law distribution, see for example \cite{Bloznelis2015SurveyProperties} and references therein. One of the reasons for having these properties is the fact that, unlike in Erd\H{o}s-R\'enyi graph, in random intersection graphs edges do not appear independently.   
 We stress here the fact that, in many ranges of parameters, random intersection graphs have large clustering i.e. apparently have some community structure.
 However still little is known about their modularity.

 In our work we concentrate on the modularity of the classical model of the random intersection graph introduced in \cite{KaronskiSchienerSinger1999Subgraph}. However we hope that this work will be followed by a more thorough study of other random intersection graph models. 

{\it Notation.} 
Before we give a formal definition of the considered model we introduce some basic notations. We denote by $[n]=\{1,2,\ldots,n\}$ the set of the first $n$ natural numbers. By $(n)_k=n\cdot(n-1)\cdots(n-k+1)$ we denote the falling factorial.
Moreover, in what follows, we use standard asymptotic notations $o(\cdot), O(\cdot), \Omega(\cdot), \Theta(\cdot), \sim, \asymp, \ll, \gg,$ as defined in \cite{JLR_random_graphs}. Also $\omega=\omega_n\to \infty$ will be a function tending to infinity arbitrarily slowly as $n\to \infty$. All limits are taken as $n\to \infty$ and inequalities  hold for large $n$. We say that an event $\mathcal{A}$ holds {\whp} if $\Pra{\mathcal{A}}\to 1$ as $n\to \infty$. We use $\Bin{\cdot}{\cdot}$ and $\Po{\cdot}$ to denote the binomial and Poisson distribution, resp.. For any graph $G$ we denote by $V(G)$ its vertex set and by $E(G)$ its edge set. Moreover for any subset $S$ of the vertex set, $S\subseteq V(G)$, of a graph $G$ we denote by $\bar S=V(G)\setminus S$ its complement. By $e_G(S)$ we denote the number edges in $G$ with both ends in $S$ and by $e_G(S,\bar S)$ the number of edges with exactly one end in $S$.  In addition by $\vol_G(S)=2e_G(S)+e_G(S,\bar S)$ we denote the sum of degrees of vertices from $S$ in $G$. We omit the subscript $G$, when it is clear from the context which graph $G$ we have in mind. Moreover, we write $e(G)=e_{G}(V(G))$ and $\vol(G)=\vol_{G}(V(G))$.

{\it Model and its properties. } 
In 
the binomial 
random intersection graph $\Gnmp$ introduced in~\cite{KaronskiSchienerSinger1999Subgraph} there is a set of vertices $\V=\{v_1,\ldots,v_n\}$ and an auxiliary set of attributes $\W=\{w_1,\ldots,w_m\}$.  
All vertices $v_i$, $i\in [n]$, are attributed random subsets 
$\W(v_i)$, $i\in [n]$, of $\W$ in such a manner that for every $v_i$, $i\in [n]$, each $w\in \W$ is included in $\W(v_i)$ independently at random with probability $p$. Two vertices $v_i$ and $v_j$, $i,j\in [n]$, are connected by an edge in the random intersection graph $\Gnmp$ if their attribute sets intersect, i.e. $\W(v_i)\cap \W(v_j)\neq \emptyset$. 
We consider a sequence of random intersection graphs 
$\Gnmp$, where 
$m=m_n\to+\infty$ and  $p=p_n\to 0$ as $n\to+\infty$.

Note that for each $i$, $i\in [n]$, the size of $\W(v_i)$ is the binomial random variable $\Bin{m}{p}$ with the expected value $mp$.
We may interpret the structure of $\Gnmp$ in another manner. Let $\V(w_i)=\{v\in \V: w_i \in \W(v)\}$ be the set of vertices that chose attribute $w_i$, $i\in [m]$. Then the edge set of $\Gnmp$ is the sum of edges of $m$ independent cliques on vertex sets $\V(w_i)$, $i\in [m]$, with sizes with the binomial distribution $\Bin{n}{p}$ and the expected value $np$.  

Now let us discuss some properties of $\Gnmp$ and the chosen range of parameters. 
In the random intersection graph $\Gnmp$ an edge $v_iv_j$, $i,j\in [n]$, is present with probability ${\bf \hat p} = 1-(1-p^2)^{m}= 1-e^{-mp^2+O(mp^4)}$. However edges do not appear independently.  In this article we focus on sparse random intersection graphs, i.e. when ${\bf \hat p}=o(1)$, which is equivalent to $mp^2=o(1)$. Our focus is motivated by the fact that most interesting results concerning modularity of random graphs concern the spare case. Moreover we assume that the expected number of edges tends to infinity, i.e. $n^2mp^2\to \infty$, as otherwise with positive probability $\Gnmp$ is an empty graph (see for example \cite{FillScheinSinger2000Equivalence}). In addition, we set $\dd=nmp^2\sim (n-1){\bf \hat p}$ which is asymptotically close to the average degree of $\Gnmp$. In some cases we  assume that $\dd\ge 1$. This is a natural assumption as $\dd=1$ is the phase transition threshold in the evolution of $\Gnmp$ for $m\not\asymp n$ (see \cite{Behrish2007PhaseTransition}) and $\dd\asymp 1$ is the phase transition threshold in the remaining cases $m\asymp n$ (see \cite{LagerasLindholm2008PhaseTransition2}). Our interest in the range $\dd\ge 1$ is  motivated by  known results about $G(n,{\bf \bar{p}})$, the Erd\H{o}s--R\'enyi random graph with independent edges. 

The structure of $\Gnmp$ is very diverse and a lot depends on the values $np$ and $mp$. A long line of research showed that for $m\gg n^3$ the graphs $\Gnmp$ and $\Gnp$ with ${\bf \bar p}$ close to $1-e^{-mp^2}$ are equivalent (\cite{BrennanBresterNagaraj2020Equivalence,FillScheinSinger2000Equivalence,KimLeeNa2018Equivalence,Rybarczyk2011Equivalence}). We should point out that $\Gnmp$   {\whp} is not edgeless for $p=\Omega(1/n\sqrt{m})$ and not a complete graph for $p = O(\sqrt{\ln n/m})$. Therefore $m\gg n^3$ implies $mp$ very large and $np$ very small. On the other hand, in some range of parameters, when $np\ll 1$, even though $\Gnmp$ and $\Gnp$ are not equivalent, there is some close relation between the models \cite{Rybarczyk2011Coupling,Rybarczyk2017coupling}. Nevertheless for $np\gg 1$ the models differ a lot (see for example results form \cite{KaronskiSchienerSinger1999Subgraph} and discussion in \cite{FillScheinSinger2000Equivalence}). One of the motivations for our work was comparison of and discussion about the relation between the results concerning modularity of  $\Gnmp$ and $\Gnp$.

{\it Related work on modularity.} Modularity was defined by Newman and Girvan in 2004 \cite{NeBook10,NeGi04} and has been extensively studied since.

\begin{definition}\label{def:modularity}
	Let $G$ be a graph with at least one edge. For a partition $\mathcal{A}$ of $V(G)$ define a modularity score of $G$ as
	\[
	\modul_{\mathcal{A}}(G) = \sum_{S\in\mathcal{A}}\left(\frac{e(S)}{e(G)}-\left(\frac{\vol(S)}{\vol(G)}\right)^2\right).
	\]
	Modularity of $G$ is given by
	\[
	\modul(G) = \max_{\mathcal{A}}\modul_{\mathcal{A}}(G),
	\]
	where maximum runs over all the partitions of the set $V(G)$.
\end{definition}
It follows straight forward that $\modul(G)\in [0,1]$.
It is considered that "large" modularity, closer to $1$, with some exceptions, is related to visible community structure, while modularity close to $0$ depicts no community structure.
In the context of our research the most important known results concern other random graph models. The modularity of the Erd\H{o}s--R\'enyi random graph with independent edges was studied by McDiarmid and Skerman \cite{McDiarmidSkerman2020}. In particular they showed that for the average degree $n{\bf \bar p}\le 1+o(1)$ the modularity of $\Gnp$ is tending to 1 in probability. It is consistent with other known results concerning modularity of random graphs with no communities and more tree-like structure (see \cite{McDiSk2018RegTreelike}). However we focus on the case above phase transition threshold, i.e. when $n{\bf \bar p}\ge C$, for some constant $C$, and ${\bf \bar p}$ is bounded away from $1$. McDiarmid and Skerman proved that then  {\whp} $\modul(\Gnp)\asymp 1/\sqrt{n{\bf \bar p}}$, where $n{\bf \bar p}$ is asymptotically the average degree of $\Gnp$. This result is consistent with modularity   $\Theta(1/\sqrt{r})$ for random $r$--regular graphs for $r\ge 3$ \cite{McDiSk2018RegTreelike} and modularity of the basic preferential attachment model with average degree $2h$ that is $\Omega(1/\sqrt{h})$ \cite{PrPrRa16_LNCS,PrPrRa17} and $O(\sqrt{\ln h/h})$ \cite{RybSulk2025+}. Moreover this result shows that for average degree $n{\bf \bar p}\to \infty$ as $n\to \infty$ {\whp} $\Gnp$ has modularity $o(1)$.

{\it Results.} Recall that $mp$ is the average size of the attribute set $\W(v_i)$, $i\in [n]$, of a vertex and $np$ is the average size of the clique $\V(w_i)$, $i\in [m]$, related to an attribute. It is plausible that large modularity (i.e. apparent and visible community structure) should be related to large $np$ and small $mp$. Since then communities should be defined by attributes and, in average, no vertex (element of the network) is associated with too many such communities.  
The first of the results concentrates on this case, i.e. $mp\ll 1$ and $np\gg 1$. In fact it follows from the proof that the modularity is in fact closely related to the community structure given by the cliques formed by elements of the network sharing a common attribute. 

We recall that assumption $mp^2=o(1)$ means that the graph is sparse (edge probability tends to $0$ as $n\to \infty$) and assumption $n^2mp^2\to \infty$ as $n\to\infty$ implies that {\whp} $\Gnmp$ has at least one edge.  As natural, these assumptions appear in all the theorems.
\begin{theorem}\label{Thm:Smallmp} 
	Let $mp^2=o(1)$ and $n^2mp^2\to \infty$ as $n\to\infty$.
	There exists $C>0$ such that for all $\eps>0$ there exists $A_\eps$ such that if
	$$
	npe^{-mp}\ge A_\eps,
	$$ 
	then with probability at least $1-\eps$
	$$
	\modul (\Gnmp)\ge (1-C\eps)e^{-mp}.
	$$
\end{theorem}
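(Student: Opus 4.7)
The plan is to exhibit an explicit partition $\mathcal{A}$ of $\V$ whose modularity score already achieves the claimed lower bound in expectation, and then to promote the expectation--level computation into a high--probability statement.

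I would define $\mathcal{A}$ via a ``first attribute'' assignment: for a vertex $v\in\V$ with $\W(v)\neq\emptyset$, set $\sigma(v)=\min\{i:w_i\in\W(v)\}$ and place $v$ into the part $S_i$ with $i=\sigma(v)$; each vertex with $\W(v)=\emptyset$ forms its own singleton part. The key structural observation is that whenever $\sigma(u)=\sigma(v)=i$, the edge $uv$ is automatically present in $\Gnmp$ because $w_i\in\W(u)\cap\W(v)$, so every within--part pair of attribute-assigned vertices is automatically an edge, which makes the coverage easy to control.

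The expected coverage is then obtained by a direct computation. Since $\Pr(\sigma(u)=\sigma(v)=i)=p^2(1-p)^{2(i-1)}$,
\begin{equation*}
\E\Big[\sum_i e(S_i)\Big]\;=\;\binom{n}{2}\,p^2\sum_{i=1}^m (1-p)^{2(i-1)}\;\sim\;\tfrac14 n^2 p\bigl(1-e^{-2mp}\bigr),
\end{equation*}
while $\E[e(\Gnmp)]=\binom{n}{2}\bigl(1-(1-p^2)^m\bigr)\sim \tfrac12 n^2mp^2$ in the sparse regime $mp^2=o(1)$, so the expected coverage is $(1-e^{-2mp})/(2mp)$. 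The elementary inequality $\sinh(x)\ge x$ for $x\ge 0$ then rewrites this as
\begin{equation*}
\frac{1-e^{-2mp}}{2mp}\;=\;e^{-mp}\,\frac{\sinh(mp)}{mp}\;\ge\;e^{-mp},
\end{equation*}
with a strict excess $e^{-mp}\bigl(\sinh(mp)/mp-1\bigr)$ that is of order $e^{-mp}(mp)^2$ when $mp\le 1$ and of order $1/(mp)$ when $mp\to\infty$. For the degree--tax I would compute $\E[\vol(S_i)]\sim n^2p^2(1-p)^{i-1}(1+(m-i)p)$ and, splitting the resulting geometric sum according to whether $(i-1)p$ is small or large, show that
\begin{equation*}
\sum_i\frac{\vol(S_i)^2}{\vol(\Gnmp)^2}\;\lesssim\;\frac{1}{m}+p
\end{equation*}
in expectation. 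Comparing with the coverage--excess is then a routine two--case check: for $mp=O(1)$ we need $1/m=O(\eps)$, which holds as $m\to\infty$ once $A_\eps$ is sufficiently large; for $mp\to\infty$ we need $p=O(\eps/(mp))$, equivalent to $mp^2=O(\eps)$, which follows from $mp^2=o(1)$.

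I expect the concentration step to be the main obstacle, because edges of $\Gnmp$ are not independent: all pairs inside a single attribute clique $\V(w_j)$ are coupled. I would handle $e(\Gnmp)$ and $\sum_i e(S_i)$ by writing each as $\sum_j\binom{|\V(w_j)|}{2}$ up to a low--order correction for pairs sharing two or more attributes, computing second moments directly, and applying Chebyshev's inequality; the assumption $n^2mp^2\to\infty$ guarantees that these expectations tend to infinity, producing $(1\pm o(1))$-concentration. The volume sum $\sum_i\vol(S_i)^2$ is the most delicate piece, since each $\vol(S_i)$ is driven by the full attribute vector of every one of its members; I would exploit the concentration of $|\W(v)|\sim\Bin{m}{p}$ around $mp$ to reduce to a typical event on vertex degrees and bound the atypical contribution separately. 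Combining these concentration statements with the deterministic coverage/tax bound then gives $\modul(\Gnmp)\ge\modul_{\mathcal{A}}(\Gnmp)\ge(1-C\eps)e^{-mp}$ with probability at least $1-\eps$ for a universal constant $C$, once $A_\eps$ is chosen large enough.
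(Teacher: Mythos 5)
Your proposal is correct in outline, but it takes a genuinely different route from the paper. The paper builds its partition only from the vertices possessing exactly one attribute: it sets $\barV_i=\{v\in\V(w_i):|\W(v)|=1\}$, uses the parts $\barV_i$ for the typical attributes and lumps all remaining vertices into a single leftover part. Each $\barV_i$ is then a clique whose volume is exactly $\tilde V_i(V_i-1)$ with $\tilde V_i=|\barV_i|\sim\Bin{n}{p(1-p)^{m-1}}$, so both the coverage and the degree tax reduce to Chernoff bounds on two binomial variables, and the hypothesis $npe^{-mp}\ge A_\eps$ is used precisely to make $\tilde V_i$ concentrate around $npe^{-mp}$. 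The price is that only edges joining two single-attribute vertices are counted, so the coverage obtained is $e^{-2mp}$ (the paper's proof in fact ends with the bound $(1-31\eps)e^{-2mp}$). Your first-attribute partition assigns every non-isolated vertex to a part, and the identity $\frac{1-e^{-2mp}}{2mp}=e^{-mp}\sinh(mp)/(mp)\ge e^{-mp}$ shows it captures at least the coverage $e^{-mp}$ claimed in the theorem, uniformly in $mp$; that is the main thing your approach buys. What it costs is that your parts are no longer cliques of vertices with deterministic degree, so both the coverage and the tax need genuine second-moment arguments instead of two Chernoff bounds.

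Two details to tighten. For the coverage, $\sum_i e(S_i)=\sum_i\binom{|S_i|}{2}$ counts the pairs $u,v$ with $\sigma(u)=\sigma(v)$; Chebyshev's inequality works here, with variance at most $\binom{n}{2}q+n^3r$ for $q=\Pra{\sigma(u)=\sigma(v)}$ and $r=\Pra{\sigma(u)=\sigma(v)=\sigma(w)}$, and the ratio to the squared mean is $O\bigl(1/(n^2q)+1/(nmp)\bigr)=o(1)$ because $np\ge A_\eps$ and $m\to\infty$. For the tax, your suggestion to use concentration of $|\W(v)|\sim\Bin{m}{p}$ around $mp$ fails when $mp$ is bounded or tends to $0$, but it is not needed: only an upper bound on $\sum_i\vol(S_i)^2$ is required, so it suffices to bound its expectation (keeping the diagonal terms $\sum_v\E[\deg(v)^2\Ind_{\sigma(v)=i}]$, which contribute an extra $O\bigl((1+mp)/(nmp)\bigr)\cdot\vol^2(G)$ and are harmless) and apply Markov's inequality at level $\eps$; the resulting loss of a factor $1/\eps$ is absorbed since $1/m+p\to 0$. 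With these repairs the plan goes through and, unlike the paper's argument, yields the bound with $e^{-mp}$ rather than $e^{-2mp}$.
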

\begin{remark}
	From the proof it follows that $C\le 16$, however we made no attempt to make the constant optimal. 
\end{remark}
\begin{corollary}\label{Cor:Smallmp}
	Let $mp^2=o(1)$, $n^2mp^2\to \infty$, and $np\to\infty$ as $n\to\infty$. Then {\whp}
	\[
	\modul(\Gnmp)\ge (1+o(1))e^{-mp}.
	\] 
	In particular, for $mp=o(1)$
	\[
	\modul(\Gnmp)=1+o(1).
	\]
\end{corollary}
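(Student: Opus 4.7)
\emph{Plan.} The corollary should fall out of Theorem~\ref{Thm:Smallmp} via a diagonal argument in $\eps$. First I would verify that under the hypotheses $mp^{2}=o(1)$, $n^{2}mp^{2}\to\infty$, and $np\to\infty$ of the corollary, we have $np\,e^{-mp}\to\infty$ as $n\to\infty$; when $mp=O(1)$ this is immediate because $e^{-mp}$ is bounded below by a positive constant and $np\to\infty$. Once this is in hand, Theorem~\ref{Thm:Smallmp} supplies, for every fixed $\eps>0$, a constant $A_\eps$ such that the bound $\modul(\Gnmp)\ge(1-C\eps)e^{-mp}$ holds with probability at least $1-\eps$ for all sufficiently large $n$.

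To upgrade this to a with-high-probability statement with the stated multiplicative precision $(1+o(1))$, I would select a sequence $\eps_n\to 0$ decaying slowly enough that $A_{\eps_n}\le np\,e^{-mp}$ still holds for every large $n$. This is possible precisely because $np\,e^{-mp}\to\infty$: concretely, fix a countable decreasing sequence $\eps^{(k)}\downarrow 0$ and let $\eps_n$ be the smallest $\eps^{(k)}$ for which $A_{\eps^{(k)}}\le np\,e^{-mp}$, so that $\eps_n\to 0$ by construction. Applying Theorem~\ref{Thm:Smallmp} with $\eps=\eps_n$ then gives
$$\modul(\Gnmp)\ge(1-C\eps_n)e^{-mp}=(1+o(1))e^{-mp}$$
with probability at least $1-\eps_n\to 1$, which is exactly the first conclusion of the corollary.

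For the ``In particular'' clause, assume additionally $mp=o(1)$. Then $e^{-mp}=1-O(mp)=1-o(1)$, so the lower bound just established becomes $\modul(\Gnmp)\ge 1-o(1)$ with high probability. Combined with the deterministic upper bound $\modul(G)\le 1$ that follows directly from Definition~\ref{def:modularity}, this yields $\modul(\Gnmp)=1+o(1)$ with high probability, as claimed.

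The one potentially delicate point is the verification that $np\,e^{-mp}\to\infty$ in the regime $mp\to\infty$: the theorem's hypothesis is phrased in terms of $np\,e^{-mp}$ rather than $np$ alone, and the sparsity condition $mp^{2}=o(1)$ together with $np\to\infty$ does not by itself force $mp$ to stay bounded. This is the step I would treat most carefully in a rigorous write-up. For the main application of the corollary, namely the ``In particular'' case $mp=o(1)$, no such issue arises and the argument above is complete.
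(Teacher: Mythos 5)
Your derivation follows the only route available: the paper gives no separate proof of this corollary, and it is indeed meant to follow from Theorem~\ref{Thm:Smallmp} by letting $\eps=\eps_n\to 0$ slowly, exactly as in your diagonal argument. The ``in particular'' clause is fully and correctly handled: for $mp=O(1)$ one has $np\,e^{-mp}\asymp np\to\infty$, your choice of $\eps_n$ works, and combining the resulting lower bound with $\modul(G)\le 1$ gives $\modul(\Gnmp)=1+o(1)$.

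The point you flag as ``potentially delicate'' is, however, a genuine gap, and it cannot be closed under the stated hypotheses: $mp^2=o(1)$, $n^2mp^2\to\infty$ and $np\to\infty$ do \emph{not} imply $np\,e^{-mp}\to\infty$. For a concrete counterexample take $p=n^{-1/2}$ and $m=n^{1/2}\ln n$; then $mp^2=\ln n/\sqrt{n}=o(1)$, $n^2mp^2=n^{3/2}\ln n\to\infty$ and $np=\sqrt{n}\to\infty$, yet $mp=\ln n$ and $np\,e^{-mp}=n^{-1/2}\to 0$, so the hypothesis $np\,e^{-mp}\ge A_\eps$ of Theorem~\ref{Thm:Smallmp} fails for every fixed $\eps$ once $n$ is large. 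In that regime the first display of the corollary simply does not follow from Theorem~\ref{Thm:Smallmp}; one would need either to add the hypothesis $np\,e^{-mp}\to\infty$ (equivalently, to read the corollary as implicitly restricted to that case), or to supply a separate argument for bounded or vanishing $np\,e^{-mp}$, which the paper does not provide. So your write-up is the correct derivation of the statement the authors evidently intend, but to be rigorous you should either state the extra hypothesis $np\,e^{-mp}\to\infty$ explicitly for the first claim, or restrict attention to $mp=O(1)$, where your argument is complete.
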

The following theorem shows that $np\to \infty$, as $n\to\infty$, does not implicitly means that $\Gnmp$ has high modularity. 
\begin{theorem}\label{Thm:Large_np_mp}
Let $mp^2=o(1)$, $n^2mp^2\to \infty$, $n=o(m)$, and $np/\ln \frac{m}{n}\to\infty$ as $n\to\infty$. Then {\whp} 
	\[
	\modul(\Gnmp)=O\left(\left(\frac{\ln(m/n)}{np}\right)^{1/2}+\frac{n}{m}+\omega mp^2\right),
	\]
for any $\omega\to \infty$ arbitrarily slowly as $n\to\infty$.
\end{theorem}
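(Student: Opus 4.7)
The plan is to produce a uniform upper bound on $\modul_\mathcal{A}(\Gnmp)$ over all partitions $\mathcal{A}$ of $\V$ by passing to the clique multigraph whose edges, for each attribute $w_i$, are all pairs inside $\V(w_i)$. Every edge of $\Gnmp$ comes from at least one shared attribute, so for any $S\in\mathcal{A}$, $e(S)\le \sum_i\binom{k_{i,S}}{2}$ with $k_{i,S}:=|\V(w_i)\cap S|$, while the overcount $X:=\sum_i\binom{k_i}{2}-e(\Gnmp)$ has $\E[X]=\binom{n}{2}(mp^2-\hat{p})=\Theta(e(\Gnmp)\,mp^2)$ and hence $X\le \omega\,mp^2\,e(\Gnmp)$ {\whp} by Markov with arbitrarily slow $\omega$. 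Standard Chernoff bounds give $e(\Gnmp)\sim n^2mp^2/2$ and $\vol(S)=(1+o(1))(|S|/n)\vol(\Gnmp)$ once $|S|\gtrsim n^2/m$; together with the multigraph bound above, these collectively produce the $\omega mp^2$ error term.

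Reducing to bipartitions via $c(\mathcal{A})=\tfrac12\sum_S e(S,\bar S)$, and treating parts with $|S|<n^2/m$ through the trivial bound $|S|/n\le n/m$ on their modularity contribution --- which supplies the $n/m$ error term --- it suffices to show, uniformly over bipartitions $\{S,\bar S\}$ with $\alpha:=|S|/n\in[n/m,\,1/2]$, that
\[
T:=\sum_{i=1}^m k_{i,S}(k_i-k_{i,S}) \ge (1-\eps)\,\E[T]\quad\text{with}\quad \eps=O\!\left(\sqrt{\ln(m/n)/(np)}\right),
\]
on an event of probability $1-o(1)$. Since $\E[T]=\alpha(1-\alpha)n^2mp^2=(1+o(1))\cdot 2\alpha(1-\alpha)e(\Gnmp)$, this lower bound on $T$ translates into $\modul_\mathcal{A}(\Gnmp)=O(\eps)+O(n/m)+O(\omega mp^2)$, exactly the stated bound.

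The heart of the proof is the concentration of $T$. Its summands $Y_i=k_{i,S}(k_i-k_{i,S})$ are independent across $i$ (the $\W(\cdot)$ are), each being a product of two independent binomials $\Bin{\alpha n}{p}$ and $\Bin{(1-\alpha)n}{p}$. Direct calculation gives $\var(T)=\Theta(\E[T]\cdot np)$ as $np\to\infty$, and truncating onto the {\whp} event $\{k_i\le 2np:i\in[m]\}$ bounds each $Y_i$ by $O((np)^2)$. An appropriate Bernstein-type inequality --- using in particular that for the lower tail, $|Y_i-\E Y_i|\le \E Y_i=\alpha(1-\alpha)(np)^2$ --- then gives
\[
\Pra{T\le(1-\eps)\E[T]}\le \exp\!\left(-c\,\eps^2\alpha(1-\alpha)\,nmp\right).
\]
Union-bounding over the $\binom{n}{\alpha n}\le\exp(nH(\alpha))$ bipartitions of size $\alpha n$ and using $H(\alpha)\le\alpha\ln(e/\alpha)$ reduces the requirement to $\eps^2\ge C\,\ln(e/\alpha)/(\alpha(1-\alpha)\,mp)$; the right-hand side is monotonically decreasing in $\alpha$ and is therefore saturated at the left endpoint $\alpha=n/m$, where it evaluates to $\Theta(\ln(m/n)/(np))$, giving the target $\eps$.

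The principal obstacle is establishing this lower tail for $T$ uniformly in $\alpha\in[n/m,1/2]$ under the relatively weak hypothesis $np/\ln(m/n)\to\infty$. One must carefully verify that the Gaussian regime of the Bernstein bound is reached for every $\alpha$ (the linear correction has size $\alpha(1-\alpha)np\cdot\eps$ relative to the Gaussian exponent, so the worst case $\alpha\sim n/m$ forces us to use $n=o(m)$ to control $\alpha(1-\alpha)np\cdot\eps\sim(n/m)\sqrt{np\ln(m/n)}$), and that the combinatorial log-cost $nH(\alpha)$ is swallowed by $\alpha(1-\alpha)nmp$ across the full range. With these uniform estimates in hand, the three error terms of the theorem emerge from three distinct sources --- Bernstein tail, small-set contribution, and multigraph overcount --- completing the proof.
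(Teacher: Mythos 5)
Your high-level skeleton --- reduce to bipartitions, union bound over the roughly $2^n$ candidate sets, and trace the three error terms to a tail bound, a size threshold, and the multi-clique overcount $E_1$ --- matches the paper's. The genuine gap is in the concentration step for $T=\sum_i k_{i,S}(k_i-k_{i,S})$. A Bernstein bound whose range parameter is the size of a summand, $C=\alpha(1-\alpha)(np)^2$ (or $O((np)^2)$ after truncation), does not survive the union bound throughout the theorem's parameter range. With $V=\Theta(\alpha(1-\alpha)n^3mp^3)$ and $t=\eps\,\E T$, the linear correction $Ct/(3V)=\Theta(\eps\,\alpha(1-\alpha)np)$ is \emph{increasing} in $\alpha$ on $[n/m,1/2]$, so the Gaussian regime is lost at $\alpha=\Theta(1)$, not at $\alpha\sim n/m$ as you claim: at $\alpha=1/2$ one has $\eps np=\Theta(\sqrt{np\ln(m/n)})\to\infty$, the exponent degenerates to $\Theta(t/C)=\Theta(\eps m)$, and $\eps m=m\sqrt{\ln(m/n)/(np)}$ can be $o(n)$ under the hypotheses (take $m=n\ln n$ and $np=\ln^3 n$, which satisfies $n=o(m)$, $np\gg\ln(m/n)$, $mp^2=o(1)$, $n^2mp^2\to\infty$), so the bound cannot beat the $2^n$ union bound; the one-sided estimate $\exp(-t^2/(2\sum_i\E Y_i^2))$ fails in the same regime. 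The paper avoids this by never concentrating the sum of unbounded summands directly: it shows each $X_{i,S}$ lies in $(1\pm\eps)sp$ with failure probability at most $\delta=O(n/m)$, concentrates the \emph{number} $M_S$ of exceptional attributes --- a sum of $0/1$ increments, so Chernoff yields $e^{-\Theta(\delta m)}=e^{-\Theta(n)}$, which does beat $2^n$ --- and bounds the exceptional attributes' contribution crudely through the tail of the clique sizes $V_i$. To close your argument you would need either this device or a genuine moment-generating-function/rate-function computation for the product of two binomials; the quoted Bernstein inequality is not enough.

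A secondary, fixable issue: the ``trivial bound $|S|/n\le n/m$'' on the modularity contribution of parts with $|S|<n^2/m$ is not trivial, since $e(S)/e(G)$ is not deterministically bounded by $|S|/n$ and your volume concentration is only claimed for $|S|\gtrsim n^2/m$; as stated this step has no proof. It is also unnecessary: after the Dinh--Thai reduction to bipartitions, the identity $e(S)/e(G)-(\vol(S)/\vol(G))^2=e(\bar S)/e(G)-(\vol(\bar S)/\vol(G))^2$ (the paper's Corollary~\ref{corollary:OnlySbig}) lets you work exclusively with the part of size at least $n/2$, so small sets never need to be handled at all.
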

We should stress that in the above considered case not only $np\to \infty$, but also $mp\to \infty$, as $n\to \infty$.

The following two results are an attempt to relate the known results concerning $\Gnp$ to modularity of $\Gnmp$. Recall that for $\Gnp$ with the average degree $n\barp=\Omega(1)$ and $\barp=o(1)$ modularity is asymptotically $\Theta(1/\sqrt{n\barp})$. 
\begin{theorem}\label{Thm:Smallnp}
	Let $np=o(1)$, $mp^2=o(1)$, and $\dd=nmp^2\ge 1$. Then   for any $A>0$ there exists a constant $C$ such that  for any $\omega=\omega_n\to \infty$ as $n\to \infty$ {\whp} 
	\[
	\modul(\Gnmp)\le \frac{C}{\sqrt{\dd}}+O\left((np)^{A}+\omega mp^2\right).
	\]
\end{theorem}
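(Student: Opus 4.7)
The plan is to transfer the McDiarmid--Skerman upper bound on the modularity of a sparse Erd\H os--R\'enyi graph to $\Gnmp$ by means of a coupling. In the regime $np=o(1)$, each attribute typically contributes at most a single edge, so $\Gnmp$ is close, in a suitable coupling sense, to $\Gnp$ of matching density $\barp\sim mp^2$. The two error terms $(np)^A$ and $\omega mp^2$ arise from the two distinct ways this coupling fails.

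\textbf{Step 1 (truncating large-clique attributes).} Fix $A\ge 1$ and call an attribute $w$ \emph{large} if $|\V(w)|\ge A+2$. The expected number of edges contributed by large attributes is
\[
\sum_{k\ge A+2}\binom{k}{2}m\binom{n}{k}p^k=O\!\left(n\dd(np)^A\right),
\]
since the series is geometric in $np=o(1)$ and dominated by its first term. A Chebyshev bound upgrades this to $O(\omega n\dd(np)^A)$ {\whp}, i.e.\ an $O(\omega(np)^A)$ fraction of $e(\Gnmp)$. Enlarging $A$ by $1$ absorbs the $\omega$ into the $(np)^A$ term.

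\textbf{Step 2 (coupling with $\Gnp$).} For the truncated graph, apply the coupling technique of \cite{Rybarczyk2011Coupling,Rybarczyk2017coupling} with $\Gnp$ for $\barp$ chosen so that $n\barp\sim\dd$. The symmetric difference of the two edge sets is dominated by \emph{overlap pairs}, i.e.\ pairs of vertices jointly covered by two or more attributes. Their expected count is $O(n^2 m^2 p^4)=O(mp^2\cdot n\dd)$, and Chebyshev's inequality bounds the actual count by $O(\omega mp^2\cdot e(\Gnmp))$ {\whp}.

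\textbf{Step 3 (applying the Erd\H os--R\'enyi bound and combining).} By \cite{McDiarmidSkerman2020}, {\whp} $\modul(\Gnp)\le C'/\sqrt{n\barp}\le C/\sqrt{\dd}$. A direct computation from Definition~\ref{def:modularity} shows that adding or removing a $\delta$-fraction of the edges of a graph (together with the induced shifts in $e(S)/e(G)$ and $(\vol(S)/\vol(G))^2$) changes the modularity by $O(\delta)$. Plugging in the perturbations from Steps 1 and 2 yields, {\whp},
\[
\modul(\Gnmp)\le \frac{C}{\sqrt{\dd}}+O\!\left((np)^A+\omega mp^2\right).
\]

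\textbf{Main obstacle.} The delicate point is Step 2. The small cliques of sizes $3$ through $A+1$ that survive Step 1 contribute $O(np)\cdot e(\Gnmp)$ correlated edges, a priori far larger than the target error $O((np)^A+\omega mp^2)$. The coupling must therefore match these clique-induced edges against an equivalent independent sample in $\Gnp$ so that only the overlap pairs (of total weight $O(mp^2\cdot e(\Gnmp))$) remain uncoupled. Executing this matching while preserving the vertex degree sequence up to $o(\sqrt{\dd})$ per vertex---so that the McDiarmid--Skerman bound transfers cleanly under the $O(\delta)$-stability of modularity---is where the bulk of the technical work lies and where the coupling constructions of Rybarczyk's prior work play the central role.
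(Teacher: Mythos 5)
There is a genuine gap, and it sits exactly where you place your ``main obstacle'': Step 2 as stated is false, and the obstacle is not a technical detail to be executed but the point where the whole route breaks. The symmetric difference between the (even truncated) random intersection graph and an independent-edge graph $\Gnp$ of matching density is \emph{not} dominated by the overlap pairs of expected count $O(n^2m^2p^4)$; it is dominated by the surplus edges inside attribute-cliques of size at least $3$, whose expected number is $\Theta\left((n)_3mp^3\right)=\Theta(np)\cdot e(\Gnmp)$. This is precisely the quantity $\Delta$ computed in the paper's proof of Theorem~\ref{Thm:LikeGnp}, and it is why that theorem only achieves the error term $O((np)^{1-\delta})$: a clique on $3$ vertices contributes three mutually constrained edges, and no coupling with a product measure on edges can absorb this $\Theta(np)$-fraction of correlated edges into a symmetric difference of size $O((np)^A+\omega mp^2)\cdot e(\Gnmp)$ for large $A$. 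Consequently your $O(\delta)$-stability argument (which is fine in itself) can only deliver an error $O(np)$, far weaker than the claimed $O((np)^A)$. Note also that even if the coupling existed, quoting the McDiarmid--Skerman bound for $\Gnp$ gives control of $\modul(\Gnp)$, and perturbation stability transfers this to the coupled graph only on the event that the coupling succeeds globally; none of this machinery is needed once one sees that the coupling cannot be that tight.

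The paper's proof avoids any comparison with $\Gnp$ and does not invoke the Erd\H{o}s--R\'enyi result at all. It first reduces to two-part partitions via Lemma~\ref{lemma:only2parts} and Corollary~\ref{corollary:OnlySbig}, so that only sets $S$ with $|S|\ge n/2$ need to be controlled. For each such $S$ it truncates the per-attribute intersection counts, $\tilde X_{i,S}=X_{i,S}\Ind_{X_{i,S}\le A}$, the leftover being the term $E_2$ with $\E E_2=O(m(np)^A)$ (your Step 1 is essentially this computation, and it is sound). It then applies Freedman's martingale inequality to $\sum_i (\tilde X_{i,S})_2$ and $\sum_i \tilde X_{i,S}\tilde Y_{i,S}$, whose conditional variances are $\asymp s^2mp^2$ and $\asymp s(n-s)mp^2$; the bounded increments supplied by the truncation give failure probability $o(2^{-n})$, so a union bound over all $2^n$ sets $S$ goes through. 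The $C/\sqrt{\dd}$ term then emerges directly from choosing the deviation $\lambda_S\asymp s\,\dd/\sqrt{\dd}$ in this concentration step, the $\omega mp^2$ term from the doubly-covered pairs $E_1$, and the $(np)^A$ term from $E_2$. In particular, the correlations caused by small cliques --- the very thing your coupling cannot handle --- are absorbed into the variance of the martingale increments rather than pushed into an edge-set discrepancy. If you want to salvage a coupling-based argument, you would first have to prove a version of the McDiarmid--Skerman upper bound that is robust to a $\Theta(np)$-fraction of adversarially placed clique edges, which is not available.
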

\begin{corollary}\label{Cor:Smallnp}
	Let $m\ge n^2$, $mp^2=o(n^{-1/3})$, and $\dd=nmp^2\ge 1$.
	Then there exists a constant $C$ such that 
	\[
	\modul(\Gnmp)\le \frac{C}{\sqrt{\dd}}.
	\]
\end{corollary}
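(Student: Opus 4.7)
The plan is to view this as a direct application of Theorem~\ref{Thm:Smallnp}, where the bulk of the work is simply verifying that the hypotheses hold and that the two error terms $(np)^A$ and $\omega mp^2$ are absorbed into $C/\sqrt{\dd}$ under the stronger assumptions $m\ge n^2$ and $mp^2=o(n^{-1/3})$. There is no serious obstacle here, only a book-keeping exercise; the corollary is essentially a qualitative clean-up of the theorem in a natural sub-regime.

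First I would check that the hypotheses of Theorem~\ref{Thm:Smallnp} are satisfied. The conditions $mp^2=o(1)$ and $\dd=nmp^2\ge 1$ are given. The remaining condition $np=o(1)$ follows from $m\ge n^2$ via
\[
(np)^2 \;=\; \frac{n^2}{m}\cdot mp^2 \;\le\; mp^2 \;=\; o(n^{-1/3}) \;=\; o(1),
\]
so in particular $np=o(n^{-1/6})$. Hence Theorem~\ref{Thm:Smallnp} applies and for any $A>0$ there exists $C=C(A)$ such that whp
\[
\modul(\Gnmp)\;\le\;\frac{C}{\sqrt{\dd}}+O\bigl((np)^{A}+\omega mp^2\bigr).
\]

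Next I would absorb the error terms. Choose $A=2$. Using $(np)^2\le mp^2$ from above,
\[
(np)^{2}\sqrt{\dd} \;\le\; mp^{2}\cdot\sqrt{nmp^{2}} \;=\; (mp^{2})^{3/2}\,n^{1/2} \;=\; o\!\left(n^{-1/2}\right)\cdot n^{1/2} \;=\; o(1),
\]
where I use $mp^2=o(n^{-1/3})$ in the last line. So $(np)^{2}=o(1/\sqrt{\dd})$. For the $\omega mp^2$ term, the same identity gives
\[
\omega\, mp^{2}\sqrt{\dd} \;=\; \omega\,(mp^{2})^{3/2}\,n^{1/2} \;=\; \omega\cdot o(1),
\]
and since $\omega$ is allowed to tend to infinity arbitrarily slowly, it can be chosen (e.g.\ $\omega=((mp^2)^{3/2}n^{1/2})^{-1/2}\to\infty$) so that $\omega\cdot(mp^{2})^{3/2}n^{1/2}\to 0$. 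Hence $\omega mp^{2}=o(1/\sqrt{\dd})$ as well.

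Combining the two error-term bounds with Theorem~\ref{Thm:Smallnp}, whp
\[
\modul(\Gnmp)\;\le\;\frac{C}{\sqrt{\dd}}+o\!\left(\frac{1}{\sqrt{\dd}}\right)\;\le\;\frac{C+1}{\sqrt{\dd}},
\]
for all sufficiently large $n$, giving the corollary with constant $C'=C+1$. The only place one must be slightly careful is to fix the choice of $A$ \emph{before} invoking the theorem (so that the constant $C$ does not depend on $n$), and to pick $\omega$ explicitly from the data $m,n,p$ so it indeed tends to infinity; both points are routine.
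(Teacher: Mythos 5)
Your proposal is correct and follows essentially the same route as the paper: apply Theorem~\ref{Thm:Smallnp} with a fixed $A\ge 2$, use $m\ge n^2$ to get $(np)^2\le mp^2$, and use $mp^2=o(n^{-1/3})$ (equivalently $\dd=o(n^{2/3})$) to absorb both error terms into $o(1/\sqrt{\dd})$. The only cosmetic difference is that the paper compares everything to the intermediate quantity $n^{-1/3}$ while you multiply through by $\sqrt{\dd}$; your explicit verification that $np=o(1)$ (needed to invoke the theorem) is a small point the paper leaves implicit.
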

The last theorem relates both models $\Gnmp$ and $\Gnp$ more directly.
\begin{theorem}\label{Thm:LikeGnp} Let $np=o(1)$ and 
$$\barp = 1-e^{-m\hat q/\binom{n}{2}}, \text{ where }\hat{q}=1-(1-p)^n-np(1-p)^{n-1}.$$ Then for any $\delta>0$ {\whp} 
	\[
	\modul(\Gnmp)=(1+o(1))\modul(\Gnp)+O((np)^{1-\delta}).
	\]
\end{theorem}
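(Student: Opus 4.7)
The plan is to construct a coupling of $\Gnmp$ with $\Gnp$ under which the two graphs \whp{} share almost all their edges, and then deduce the comparison of modularities from a Lipschitz-type stability of $\modul$ under edge perturbations. To this end I first split the edges of $\Gnmp$ by the attribute groups generating them: let $E_c$ collect the edges contributed by ``big'' attributes $w_i$ with $|\V(w_i)|\ge 3$ and $E_s$ those contributed by ``simple'' attributes with $|\V(w_i)|=2$. Since $\Pr(\Bin{n}{p}\ge 3)=O((np)^3)$ and each such attribute contributes at most $\binom{|\V(w_i)|}{2}$ edges, a short computation gives $\E|E_c|=O(m(np)^3)$, while $\E\,e(\Gnmp)\sim\binom{n}{2}mp^2$. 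A Markov-type argument then yields $|E_c|\le (np)^{1-\delta}\,e(\Gnmp)$ \whp{} for every fixed $\delta>0$.

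Next I couple $E_s$ with the edge set of a $\Gnp$. Conditionally on $|\V(w_i)|=2$, the pair $\V(w_i)$ is uniform over $\binom{[n]}{2}$, and distinct attributes act independently. The identity $\hat q/\binom{n}{2}=p^2(1-p)^{n-2}(1+O(np))$ shows that $\barp$ differs only by a $(1+O(np))$-factor from the ``simple-edge'' pair-inclusion probability $p_s=1-(1-p^2(1-p)^{n-2})^m$. Combining the Poisson approximation for the number of simple attributes with the quantitative coupling technique of \cite{Rybarczyk2011Coupling,Rybarczyk2017coupling}, I obtain a joint distribution of $\Gnmp$ and $\Gnp$ under which \whp{}
\[
|E(\Gnmp)\triangle E(\Gnp)|\le |E_c|+|E_s\triangle E(\Gnp)|=O((np)^{1-\delta})\,e(\Gnp),
\]
together with $e(\Gnmp)=(1+o(1))\,e(\Gnp)$ and $\vol(\Gnmp)=(1+o(1))\,\vol(\Gnp)$.

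Finally I invoke the following stability principle: if $G,H$ share a vertex set with $e(G)=(1+o(1))\,e(H)$ and $|E(G)\triangle E(H)|\le \eta\,e(H)$ for some $\eta=o(1)$, then $\modul(G)=(1+o(1))\modul(H)+O(\eta)$. The proof is a partition-wise comparison: flipping a single edge perturbs each of $\sum_S e(S)/e(G)$ and $\sum_S \vol(S)^2/\vol(G)^2$ by $O(1/e(G))$, so $\eta\,e(H)$ flips contribute an $O(\eta)$ additive error, while the discrepancy between the normalising constants $e$ and $\vol$ only causes the $(1+o(1))$ factor. Specialising to $G=\Gnmp$, $H=\Gnp$ and $\eta=O((np)^{1-\delta})$ yields the theorem.

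The main obstacle is the quantitative coupling in the second step. One has to simultaneously control (i) the gap between a $\Bin{m}{\hat q}$ and a $\Po{m\hat q}$ number of useful attributes, (ii) the mild negative correlation between pairs produced by the same attribute, and (iii) the $(1+O(np))$ discrepancy between $p_s$ and $\barp$. Each effect is tiny on average, but upgrading these to a \whp{} bound of the form $O((np)^{1-\delta})\,e(\Gnp)$, rather than just $o(e(\Gnp))$, is what forces the $(np)^{-\delta}$ loss and requires invoking the quantitative couplings of \cite{Rybarczyk2011Coupling,Rybarczyk2017coupling} together with standard concentration for the edge count.
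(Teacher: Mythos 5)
Your outline is sound and reaches the theorem, but it takes a somewhat different (and in one place harder) route than the paper. The paper does not split attributes into ``simple'' ($V_i=2$) and ``big'' ($V_i\ge 3$): it builds an intermediate graph $\hatGnmp\subseteq\Gnmp$ by selecting \emph{one uniformly random pair} from $\V(w_i)$ for \emph{every} attribute with $V_i\ge 2$. This is precisely why $\hat q=\Pr(V_i\ge2)$ appears in the statement: after swapping the $\Bin{m}{\hat q}$ count of useful attributes for a $\Po{m\hat q}$ count, the resulting multigraph has independent edges with probability exactly $\barp=1-e^{-m\hat q/\binom{n}{2}}$, and the total variation distance to $\hatGnmp$ is bounded by $2\,{\rm d}_{TV}(\Bin{m}{\hat q},\Po{m\hat q})\le\hat q=O((np)^2)=o(1)$. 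Hence $\hatGnmp$ can be coupled to be \emph{equal} to $\Gnp$ \whp, and the only quantity left to control is the surplus $\Delta=e(\Gnmp)-e(\hatGnmp)\le\sum_i\bigl(\binom{V_i}{2}-\Ind_{V_i\ge2}\bigr)$, handled by Markov exactly as your $|E_c|$ bound. Your decomposition, by contrast, matches only the $V_i=2$ attributes against $\Gnp$, so the pair-inclusion probability you obtain is $1-(1-p^2(1-p)^{n-2})^m$ rather than $\barp$; the resulting $(1+O(np))$ mismatch in edge probabilities cannot be absorbed into a $o(1)$ total variation bound (that distance is of order $np\sqrt{e(\Gnp)}$, which need not be small), so you genuinely need the quantitative symmetric-difference coupling you flag as the main obstacle. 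It can be done (a monotone coupling of $G(n,p_1)\subseteq G(n,p_2)$ contributes $\Bin{\binom{n}{2}}{p_2-p_1}$ extra edges, of mean $O(np)\,e(\Gnp)$, plus the Poissonization error), but you should be aware that the paper's choice of representative edge makes this entire step unnecessary; as written, the heart of your argument is delegated to citations rather than executed.

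On the final transfer step your route is genuinely different and arguably cleaner: you use a Lipschitz stability of modularity under edge flips (each flip changes $\modul_{\mathcal A}$ by $O(1/e(G))$ uniformly over partitions, which is correct and yields $|\modul(G)-\modul(H)|=O(\eta)$ directly), whereas the paper compares partition scores only for partitions with at most $\lfloor (np)^{-\delta}\rfloor$ parts and invokes Lemma~\ref{lemma:only2parts} to recover the full modularity, losing an extra $(np)^{-\delta}$ in the error term. Since $\delta>0$ is arbitrary both give the stated bound, but your stability lemma avoids the restriction on the number of parts altogether.
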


The remaining part of the article is organised as follows. In the next sections we give auxiliary results. In particular, Section~\ref{Sec:Modularity} states one known and one new useful result about modularity. In Section~\ref{Sec:Edges} we give some general results about the edge counts in $\Gnmp$. Section~\ref{Sec:Auxiliary} states known probabilistic inequalities that are used in the proofs. Sections~\ref{Sec:Large_np_mp} to \ref{Sec:LikeGnp} present the proofs of theorems. We finish with concluding remarks. 

\section{Modularity lemmas}\label{Sec:Modularity}
We will use a result on modularity by Dinh and Thai.

\begin{lemma}[\cite{DiTh11}, Lemma 1] \label{lemma:only2parts}
	Let $G$ be a graph with at least one edge and let $k \in \mathbb{N} \setminus \{1\}$. Then
	\[
	\modul(G) \leq \frac{k}{k-1} \max_{\mathcal{A}: |\mathcal{A}| \leq k} \modul_{\mathcal{A}}(G).
	\]
	In particular,
	\[
	\modul(G) \leq 2  \max_{\mathcal{A}: |\mathcal{A}| \leq 2} \modul_{\mathcal{A}}(G).
	\]
\end{lemma}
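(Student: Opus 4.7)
The plan is to establish the claim via a probabilistic coarsening argument applied to the optimal partition. Fix a partition $\mathcal{A}^\ast=\{S_1,\dots,S_\ell\}$ with $\modul_{\mathcal{A}^\ast}(G)=\modul(G)$; if $\ell\le k$ there is nothing to prove, so I would assume $\ell>k$. Writing $a_i=\vol(S_i)/\vol(G)$ and, for $i\ne j$,
\[
\Delta_{ij}=\frac{e(S_i,S_j)}{e(G)}-2a_ia_j,
\]
a direct expansion shows that for any map $\pi\colon[\ell]\to[k]$ the coarsened partition $\mathcal{B}_\pi=\{B_1,\dots,B_k\}$ defined by $B_r=\bigcup_{i:\pi(i)=r}S_i$ satisfies
\[
\modul_{\mathcal{B}_\pi}(G)=\modul(G)+\sum_{r=1}^{k}\sum_{\substack{i<j\\ \pi(i)=\pi(j)=r}}\Delta_{ij},
\]
because merging the $S_i$ lying in one group adds their cross edges to the internal edge count and the cross products $2a_ia_j$ to the squared-volume term.

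The crux of the argument is the identity
\[
\sum_{1\le i<j\le\ell}\Delta_{ij}=-\modul(G),
\]
which I would verify by observing that $\sum_{i<j}e(S_i,S_j)=e(G)-\sum_i e(S_i)$ and $2\sum_{i<j}a_ia_j=1-\sum_i a_i^2$, and then comparing with Definition~\ref{def:modularity}. With this in hand I would draw $\pi$ uniformly at random from $[k]^{[\ell]}$, so that $\Pra{\pi(i)=\pi(j)}=1/k$ for $i\ne j$. Substituting into the displayed formula for $\modul_{\mathcal{B}_\pi}(G)$ and taking expectation gives
\[
\E\bigl[\modul_{\mathcal{B}_\pi}(G)\bigr]=\modul(G)+\frac{1}{k}\sum_{i<j}\Delta_{ij}=\frac{k-1}{k}\modul(G).
\]

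By the first-moment method there exists a particular $\pi$ with $\modul_{\mathcal{B}_\pi}(G)\ge\frac{k-1}{k}\modul(G)$, and $\mathcal{B}_\pi$ has at most $k$ non-empty parts; rearranging gives $\modul(G)\le\frac{k}{k-1}\max_{|\mathcal{A}|\le k}\modul_{\mathcal{A}}(G)$, and the case $k=2$ is immediate. I expect the only delicate step to be the bookkeeping underlying the identity $\sum_{i<j}\Delta_{ij}=-\modul(G)$, which is pure algebra but is the engine of the argument; everything else is a one-line averaging that uses no property of $\mathcal{A}^\ast$ beyond it being a maximiser, and no property of $G$ beyond having at least one edge so that $\modul(G)$ is defined.
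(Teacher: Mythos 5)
The paper does not prove this lemma at all: it is imported verbatim from Dinh and Thai \cite{DiTh11} as a black box, so there is no in-paper argument to compare yours against. Your proof is correct and self-contained, and the averaging mechanism is sound. The three ingredients all check out: (i) the coarsening identity follows because $e(B_r)=\sum_{i:\pi(i)=r}e(S_i)+\sum_{i<j:\pi(i)=\pi(j)=r}e(S_i,S_j)$ and $\bigl(\sum_{i:\pi(i)=r}a_i\bigr)^2=\sum_{i:\pi(i)=r}a_i^2+2\sum_{i<j:\pi(i)=\pi(j)=r}a_ia_j$; (ii) the identity $\sum_{i<j}\Delta_{ij}=-\modul_{\mathcal{A}^\ast}(G)$ follows from $\sum_{i<j}e(S_i,S_j)=e(G)-\sum_ie(S_i)$ together with $\sum_ia_i=1$; and (iii) under a uniform $\pi$ each pair collides with probability exactly $1/k$, so $\E[\modul_{\mathcal{B}_\pi}(G)]=\frac{k-1}{k}\modul(G)$ and the first-moment method finishes (discarding empty parts, which contribute zero to the score, leaves at most $k$ blocks). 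Two cosmetic remarks: the identity in (ii) holds for \emph{any} partition, not just the maximiser, so optimality of $\mathcal{A}^\ast$ is used only to replace $\modul_{\mathcal{A}^\ast}(G)$ by $\modul(G)$; and the final rearrangement needs only $k/(k-1)>0$, not nonnegativity of modularity. One could alternatively argue deterministically by pigeonhole on the $\binom{\ell}{2}$ pairs, but your randomised merging is the cleaner route and proves exactly the stated bound.
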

We prove the following corollary of this lemma.
\begin{corollary}\label{corollary:OnlySbig}
	Let $G$ be a graph with at least one edge  then
	\[
	\modul(G)\le 4 \max_{S\subseteq \V, |S|\ge n/2}\left(\frac{e(S)}{e(G)}-\left(\frac{\vol(S)}{\vol(G)}\right)^2\right).
	\]
\end{corollary}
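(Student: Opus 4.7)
The plan is to combine Lemma~\ref{lemma:only2parts} with the observation that the per-part modularity contribution is invariant under complementation, so that for any two-part partition we may without loss of generality pick the larger side as representative.

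First I would apply Lemma~\ref{lemma:only2parts} with $k=2$, which reduces the problem to bounding $\modul_{\mathcal{A}}(G)$ over partitions $\mathcal{A}$ with $|\mathcal{A}|\le 2$. The trivial partition $\mathcal{A}=\{V(G)\}$ contributes $0$ (since $e(G)/e(G)=1$ and $\vol(G)/\vol(G)=1$), so the only interesting case is $\mathcal{A}=\{S,\bar S\}$ with $S$ a proper nonempty subset of $V(G)$.

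Next I would verify the symmetry identity $f(S)=f(\bar S)$, where I abbreviate $f(S)=\frac{e(S)}{e(G)}-\left(\frac{\vol(S)}{\vol(G)}\right)^2$. Setting $a=e(S)/e(G)$ and $b=e(S,\bar S)/e(G)$, the relation $\vol(G)=2e(G)$ gives $\vol(S)/\vol(G)=a+b/2$ and $\vol(\bar S)/\vol(G)=1-a-b/2$, while $e(\bar S)/e(G)=1-a-b$. Expanding $(1-a-b/2)^2=1-2a-b+(a+b/2)^2$ yields
\[
f(\bar S)=(1-a-b)-(1-a-b/2)^2 = a-(a+b/2)^2 = f(S).
\]
Hence $\modul_{\{S,\bar S\}}(G)=f(S)+f(\bar S)=2f(S)=2f(\bar S)$.

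Finally, since $|S|+|\bar S|=n$ at least one of the two sides has size at least $n/2$; choosing that side as $S'$ we obtain $\modul_{\{S,\bar S\}}(G)=2f(S')\le 2\max_{|S'|\ge n/2}f(S')$. Combining this with the factor $2$ from Lemma~\ref{lemma:only2parts} yields the claimed bound with constant $4$. The only non-routine ingredient is the complementation identity; once it is in hand, the size restriction $|S|\ge n/2$ can be imposed for free, and no further estimates are needed.
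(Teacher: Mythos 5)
Your proposal is correct and follows essentially the same route as the paper: apply Lemma~\ref{lemma:only2parts} with $k=2$, establish the complementation identity $f(S)=f(\bar S)$ (the paper's equation~\eqref{Eq:ModularitySymmetry}, which you verify by an equivalent normalized computation with $a$ and $b$), and then restrict to the part of size at least $n/2$ to obtain the constant $4$. No gaps.
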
	
\begin{proof}
	First note that
	\begin{equation*}\begin{split}
		e(S)&=e(\bar S)+ e(G)-\vol(\bar{S}),\\
		\vol^2(S)&=\left(\vol(G)-\vol(\bar S)\right)^2\\
		&=\vol(G)\left(\vol(G)-2\vol(\bar S)\right)+\vol^2(\bar S).
	\end{split}\end{equation*}
	Therefore
	\begin{equation}\label{Eq:ModularitySymmetry}
		\begin{split}
			&\frac{e(S)}{e(G)}-\left(\frac{\vol(S)}{\vol(G)}\right)^2\\
			&=\frac{e(\bar S)}{e(G)}+ \frac{e(G)-\vol(\bar{S})}{e(G)}-\frac{\vol(G)\left(\vol(G)-2\vol(\bar S)\right)}{\vol^2(G)}-\left(\frac{\vol(\bar S)}{\vol(G)}\right)^2\\
			&=\frac{e(\bar S)}{e(G)}-\left(\frac{\vol(\bar S)}{\vol(G)}\right)^2
		\end{split}
	\end{equation}
	Note that for $|\mathcal A|=1$ we have $\modul_{\mathcal A}(G)=0$. 
	Thus, as either $S\ge n/2$ or $\bar S> n/2$, by Definition~\ref{def:modularity}, Lemma~\ref{lemma:only2parts}, and \eqref{Eq:ModularitySymmetry}
	\begin{equation*}\begin{split}
		\modul(G)&\le 2\max_{\mathcal A, |\mathcal A|=2}\modul_{\mathcal A}(G)\\
		&=2\max_{S\subseteq \V}\left(\frac{e(S)}{e(G)}-\left(\frac{\vol(S)}{\vol(G)}\right)^2
		+\frac{e(\bar S)}{e(G)}-\left(\frac{\vol(\bar S)}{\vol(G)}\right)^2
		\right)\\
		&=2\max_{S\subseteq \V, |S|\ge n/2}2\left(\frac{e(S)}{e(G)}-\left(\frac{\vol(S)}{\vol(G)}\right)^2
		\right).
	\end{split}\end{equation*}
\end{proof}

\section{Counting edges in $\Gnmp$}\label{Sec:Edges}
Recall that, as mentioned in the introduction, in $\Gnmp$ each edge appears with probability 
\[
1-(1-p^2)^m= (1-e^{-(1+o(1))mp^2}) = mp^2(1+O(mp^2)),\text{ for }mp^2=o(1).
\]
Therefore 
\[
\E e(\Gnmp)=\binom{n}{2}mp^2(1+O(mp^2))=\frac12 n^2mp^2(1+O(mp^2+n^{-1})).
\]
Moreover, for $mp^2=o(1)$,
\[
\var (e(\Gnmp))\sim \binom{n}{2}mp^2+(n)_3mp^3\asymp n^2mp^2(1+np).
\]
By Chebyshev's inequality we get that for $mp^2\ll 1$ and $n^2mp^2\gg 1$ {\whp}
\begin{multline}\label{Eq:eGconc}
	\vol(\Gnmp)=2e(\Gnmp)\\=n^2mp^2\left(1+O\left( n^{-1}+mp^2+\omega (1+np) (n^2mp^2)^{-1/2}\right)\right) \sim n^2mp^2.
\end{multline}
Recall that $\omega=\omega_n$ is a function tending to infinity arbitrarily slowly as $n\to \infty$.

For $S\subseteq\V$, let
\begin{equation*}
	X_{i,S}=|\V(w_i)\cap S|,\quad V_i=X_{i,\V}=|\V(w_i)|,\quad i\in [m].
\end{equation*}
By definition, each attribute $w_i\in\W$, $i\in [m]$, generates a clique of size $V_i$ in $\Gnmp$. Moreover each edge of $\Gnmp$ is included in at least one clique. However some edges might be included in two or more such cliques. Let us denote by
\[
E_1=|\{vv'\in E(\Gnmp):\exists_{i,j\in [m]}v,v'\in \V(w_i)\cap \V(w_j), i\neq j\} |
\]
the number of such edges that contribute to more than one clique. Then
\[
\E E_1\le n^2m^2p^4
\]
and by Markov's inequality and \eqref{Eq:eGconc} we get that, for $mp^2\ll 1$ and $n^2mp^2\gg 1$, {\whp}
\begin{equation}\label{Eq:E1}
	E_1=O\left(\omega n^2m^2p^4\right)=O(\omega mp^2)e(\Gnmp).
\end{equation}
Now we may relate the number of edges $e(S)$, $e(S,\bar S)$, and the volume $\vol(S)$   in $\Gnmp$ to $X_{i,S}$ and $X_{i,\bar S}$. Recall that $X_{i,S}$ is the number of vertices in $S$ that chose attribute $w_i$. Therefore $\binom{X_{i,S}}{2}$ is the number of edges included in $S$  that are in the clique of vertices that chose $w_i$. Each edge from $S$ is included in at least one such clique. Similarly   $X_{i,S}X_{i,\bar S}$  is the number of edges with one end in $S$ and the other in $\bar S$ and included in the clique related to $w_i$. Therefore, taking into account that some edges might be included in two cliques (see  the  above discussion about $E_1$), we get 
\begin{equation}\label{Eq:DefeS}
	e(S)\le \sum_{i \in [m]} \binom{X_{i,S}}{2}=\frac{1}{2}\sum_{i \in [m]} (X_{i,S})_2.
\end{equation}
Moreover
\begin{equation}\label{Eq:DefeSbarS}
	e(S,\bar S)\ge  \sum_{i \in [m]} X_{i,S}X_{i,\bar S}-E_1,
\end{equation}
and
\begin{equation}\label{Eq:DefVolume}
	\begin{split}
		\vol(S)&=2e(S)+e(S,\bar S)\\
		&\ge \sum_{i \in [m]} X_{i,S}(X_{i,S}-1+X_{i,\bar S})-2E_1\\
		&= \sum_{i \in [m]} X_{i,S}(V_i-1)-2 E_1.
	\end{split}	
\end{equation}
\section{Auxiliary lemmas}\label{Sec:Auxiliary}
In this section we give some classical inequalities that are used in the remaining part of the article. 
We start with  Freedman's inequality. We state it here in the form that is convenient for our purposes (see corollary of Lemma 22 of \cite{Wa16} by Warnke or  \cite{BeDu22} by Bennett and Dudek).

\begin{lemma}  \label{lemma:Freedman}
	Let $M_0, M_1, \ldots, M_m$ be a martingale with respect to a filtration $\mathcal{F}_0 \subseteq \mathcal{F}_1 \subseteq \ldots \subseteq \mathcal{F}_n$. Let $|M_i - M_{i-1}|\le C$, for all $i\in [m]$, and $V= \sum_{i=1}^{m} \var(M_i-M_{i-1}|\mathcal{F}_{i-1}).$ Then 
	\[
	\Pra{M_m \geq M_0 + \lambda} \leq \exp\left\{-\frac{\lambda^2}{2V\left(1 + C\lambda/(3V)\right)}\right\}.
	\]
\end{lemma}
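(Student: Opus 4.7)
The plan is to run the standard exponential supermartingale argument and then specialize the resulting Bennett-type bound to the Freedman form. Let $D_i := M_i - M_{i-1}$, so the $D_i$ are martingale differences with $|D_i|\le C$ and $V = \sum_{i=1}^m \var(D_i \mid \mathcal{F}_{i-1})$. Write $\psi(u) := e^u - 1 - u$. The first step is to establish the conditional moment-generating-function bound
\[
\E\!\left[e^{\theta D_i} \mid \mathcal{F}_{i-1}\right] \;\le\; \exp\!\left(\frac{\psi(\theta C)}{C^2}\,\var(D_i \mid \mathcal{F}_{i-1})\right), \qquad \theta > 0.
\]
This follows from the pointwise inequality $e^{\theta x} \le 1 + \theta x + (\psi(\theta C)/C^2)\,x^2$ valid for $|x|\le C$ (which holds because $x\mapsto(e^{\theta x}-1-\theta x)/x^2$ is non-decreasing in $x$), by taking conditional expectations, using $\E[D_i \mid \mathcal{F}_{i-1}] = 0$, and applying $1+y \le e^y$.

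Next I would form the process
\[
N_k \;:=\; \exp\!\Bigl(\theta(M_k-M_0) - \tfrac{\psi(\theta C)}{C^2}\sum_{i=1}^{k} \var(D_i\mid\mathcal{F}_{i-1})\Bigr),
\]
which by the MGF bound is a non-negative supermartingale with $N_0=1$, so $\E N_m \le 1$. Combining this with Markov's inequality applied to $e^{\theta(M_m - M_0)}$ yields the Bennett-type tail estimate
\[
\Pra{M_m - M_0 \ge \lambda} \;\le\; \exp\!\left(-\theta\lambda + \frac{\psi(\theta C)}{C^2}\,V\right).
\]

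To convert this into the Freedman denominator I would use the elementary inequality $\psi(u) \le u^2/\bigl(2(1-u/3)\bigr)$, valid for $0 \le u < 3$ and verified by straightforward term-by-term comparison of Taylor series. Optimizing in $\theta$, the choice $\theta := \lambda/(V + C\lambda/3)$ ensures $\theta C < 3$, and substitution collapses the exponent to
\[
-\theta\lambda + \tfrac{\psi(\theta C)}{C^2}V \;\le\; -\theta\lambda + \frac{\theta^2 V/2}{1 - \theta C/3} \;=\; -\frac{\lambda^2}{2(V+C\lambda/3)} \;=\; -\frac{\lambda^2}{2V\bigl(1+C\lambda/(3V)\bigr)},
\]
which is exactly the stated bound.

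The one real subtlety is that $V$, as written, is a priori a random variable (a sum of conditional variances) rather than a deterministic constant. The cleanest remedy is a standard stopping-time argument: for a prescribed deterministic upper bound $v$ on the predictable quadratic variation, one stops the martingale at the last index before the running sum $\sum_{i\le k}\var(D_i\mid\mathcal{F}_{i-1})$ exceeds $v$, and applies the supermartingale bound to the stopped process; this is the form in which the inequality is recorded in the cited references \cite{Wa16,BeDu22}. In the later applications in this paper, $V$ will always be supplied as such a deterministic upper bound, so only the form stated in Lemma~\ref{lemma:Freedman} is needed.
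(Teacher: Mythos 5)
Your proof is correct. Note that the paper does not actually prove this lemma: it is stated as a known result, with a pointer to a corollary of Lemma~22 of \cite{Wa16} and to \cite{BeDu22}, so there is no in-paper argument to compare against. What you give is the standard self-contained derivation: the conditional MGF bound via monotonicity of $x\mapsto (e^{\theta x}-1-\theta x)/x^2$, the exponential supermartingale, and the conversion $\psi(u)\le u^2/(2(1-u/3))$ with the optimal choice $\theta=\lambda/(V+C\lambda/3)$; I checked that the algebra collapses to exactly the stated exponent. You also correctly flag the one point where the lemma as stated is slightly loose --- $V$ is a priori random, so the clean statement requires either a deterministic upper bound on the predictable quadratic variation or the stopping-time formulation --- and this matches how the lemma is actually used later in the paper, where $V$ is always replaced by a deterministic asymptotic bound such as $V\sim 2s^2mp^2$.
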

We will also use Chernoff's bound for the binomial distribution (see for example Theorem~2.1 and Corollary~2.3 in \cite{JLR_random_graphs})
\begin{lemma}  \label{lemma:Chernoff}
	Let $X$ has the binomial distribution $\Bin{n}{p}$, then for $t>0$
	\begin{equation*}
		\begin{split}
			\Pra{X\ge np + t}&\le \exp\left(-\frac{t^2}{2np+\frac{2}{3}t}\right);\\
			\Pra{X\ge np + t}&\le \exp\left(-\frac{t^2}{3np}\right), \text{ for }t/np\le 3/2;\\
			\Pra{X\le np - t}&\le \exp\left(-\frac{t^2}{2np}\right).\\
		\end{split}
	\end{equation*}
\end{lemma}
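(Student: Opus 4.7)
My plan is to couple $\Gnmp$ with $\Gnp$ (for $\barp$ as specified) in such a way that, {\whp}, the two graphs differ in at most $O((np)^{1-\delta}) e(\Gnmp)$ edges, and then invoke a Lipschitz-type stability of modularity under edge perturbations to transfer the bound. The choice of $\barp = 1 - \exp(-m\hat q/\binom{n}{2})$ is dictated by the structure of $\Gnmp$ under $np = o(1)$: almost every nonempty attribute clique $\V(w_i)$ has size exactly $2$, its endpoints are uniformly distributed over $\binom{[n]}{2}$, and Poissonising the number of such cliques (whose expectation is $m\hat q$, since $\hat q = \Pra{V_i\ge 2}$) produces an Erd\H{o}s--R\'enyi graph with precisely the stated edge probability.

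To construct the coupling I would build on the constructions in \cite{Rybarczyk2011Coupling,Rybarczyk2017coupling}. Split the edges of $\Gnmp$ into three types: (a) edges belonging to a clique $\V(w_i)$ of size $\ge 3$; (b) edges generated by more than one attribute (the set counted by $E_1$ in Section~\ref{Sec:Edges}); and (c) the remaining ``simple'' edges, each arising from a unique size-$2$ attribute clique. Summing $\binom{k}{2}\binom{n}{k}p^k(1-p)^{n-k}$ over $k\ge 3$ gives $\E(\text{type (a)}) = O(m(np)^3)$, so Markov's inequality yields an $O(\omega m(np)^3)$ bound {\whp}; by~\eqref{Eq:E1}, $E_1 = O(\omega mp^2) e(\Gnmp)$ {\whp}; and conditional on the count $N_2$ of simple attributes, their endpoints form a collection of uniform i.i.d.\ pairs that can be Poissonised and reduced to a simple graph distributed as $\Gnp$ with $\barp' = 1 - \exp(-m\Pra{V_i=2}/\binom{n}{2})$. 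Combining with~\eqref{Eq:eGconc}, the ratio of bad edges to $e(\Gnmp)$ is at most $O(\omega(np+mp^2))$ {\whp}, which is $O((np)^{1-\delta})$ for an appropriate slow $\omega$; a small symmetric addition of at most $O(m(np)^3)$ auxiliary independent edges adjusts $\barp'$ to the target $\barp$ without exceeding this order.

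The modularity stability I would establish is the following lemma: if graphs $G_1, G_2$ on the same vertex set satisfy $|E(G_1)\triangle E(G_2)|\le D$ and $e(G_i), \vol(G_i) \ge E_0$, then $|\modul(G_1) - \modul(G_2)| = O(D/E_0)$. The proof is a direct calculation: for any partition $\mathcal{A}$, each edge in the symmetric difference is counted in $e_G(S)$ for at most one $S$, so $\sum_S |e_{G_1}(S) - e_{G_2}(S)| \le D$; combining with $|e(G_1) - e(G_2)| \le D$ gives $\sum_S |e_{G_1}(S)/e(G_1) - e_{G_2}(S)/e(G_2)| = O(D/E_0)$, while the identity $a^2 - b^2 = (a-b)(a+b)$ together with $\sum_S \vol_G(S) = \vol(G)$ yields the same order bound for the volume term. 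Applying this lemma to the coupling gives the additive $O((np)^{1-\delta})$ error, and the multiplicative $(1+o(1))$ factor absorbs the $1+o(1)$ fluctuations of the edge and volume totals via~\eqref{Eq:eGconc} applied to both graphs. The main obstacle I anticipate is making the coupling Poissonisation precise enough that the $\Gnp$ side carries exactly the stated $\barp$ and not merely $\barp'$; this requires careful bookkeeping of the $\Pra{V_i = 2}$ versus $\Pra{V_i \ge 2}$ discrepancy and a clean treatment of the auxiliary adjusting edges, so that their count is provably absorbed into the $O((np)^{1-\delta})$ budget.
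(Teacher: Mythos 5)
Your proposal does not address the statement at hand. The statement is Lemma~\ref{lemma:Chernoff}, a standard Chernoff-type tail bound for a binomial random variable $X\sim\Bin{n}{p}$: three exponential estimates for $\Pra{X\ge np+t}$ and $\Pra{X\le np-t}$. What you have written is instead a proof sketch for Theorem~\ref{Thm:LikeGnp} --- a coupling of $\Gnmp$ with $\Gnp$ and a stability argument for modularity under edge perturbations. Nothing in your text bounds the tails of a binomial distribution; indeed your argument \emph{uses} concentration of binomial-type quantities (e.g.\ via \eqref{Eq:eGconc} and Markov's inequality) rather than establishing it.

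For the record, the paper does not prove Lemma~\ref{lemma:Chernoff} at all: it is quoted from Theorem~2.1 and Corollary~2.3 of \cite{JLR_random_graphs}. If you were asked to supply a proof, the standard route is the exponential moment method: for $u>0$, $\Pra{X\ge np+t}\le e^{-u(np+t)}\E e^{uX}=e^{-u(np+t)}(1-p+pe^u)^n\le\exp\left(np(e^u-1)-u(np+t)\right)$, and optimising over $u$ gives $\Pra{X\ge np+t}\le\exp\left(-np\,\varphi(t/np)\right)$ with $\varphi(x)=(1+x)\ln(1+x)-x$; the three displayed inequalities then follow from the elementary bounds $\varphi(x)\ge x^2/(2+2x/3)$, $\varphi(x)\ge x^2/3$ for $0\le x\le 3/2$, and $\varphi(-x)\ge x^2/2$ for $0\le x\le 1$. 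Your coupling sketch, whatever its merits as an outline of Section~\ref{Sec:LikeGnp}, cannot be spliced in here.
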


\section{Proof of Theorem \ref{Thm:Smallmp}}\label{Sec:Smallmp}
Recall that $mp^2\ll 1$ and $n^2mp^2\gg 1$. 
Denote by
$$\barV_i=\{v\in \V(w_i): |\W(v)|=1\},\quad \tilde{V}_i=|\barV_i|, i\in [m]$$
the sets of vertices that chose exactly one attribute $w_i$, $i\in [m]$, and their sizes. 
Note that $\tilde{V}_i$, $i\in [m]$, has the binomial distribution $\Bin{n}{p(1-p)^{m-1}}$, i.e. $\E(\tilde{V}_i)\sim npe^{-mp}$, $i\in [m]$. 
Set $\eps\in (0,1)$ and $ A_\eps = 3\eps^{-2}\ln (4/\eps^2)$. Let $np$ and $mp$ be such that $npe^{-mp}> A_\eps$ (i.e. also $np> A_\eps$). As $V_i$ and $\tilde V_i$, $i\in [m]$, have binomial distributions, by Chernoff's inequality (Lemma~\ref{lemma:Chernoff}), for all $i\in [m]$,
\begin{equation*}\begin{split}
	\Pra{\left|\tilde{V}_i-npe^{-mp}\right|\ge \eps npe^{-mp}}&\le 2\exp\left(-\frac{\eps^2 A_\eps}{3}\right)\le \frac{\eps^2}{2} \text{ and }\\
	\Pra{|V_i-np|\ge \eps np}&\le 2\exp\left(-\frac{\eps^2 A_\eps}{3}\right)\le \frac{\eps^2}{2}.
\end{split}\end{equation*}
Let
$$
\mathcal M_1=\left\{i\in [m]: |\tilde{V}_i-npe^{-mp}|\le \eps npe^{-mp} \text{ and }|V_i-np|\le \eps np\right\}.
$$
Then by the union bound and the above concentration results
\begin{multline}
	\E |[m]\setminus \mathcal M_1|\le\\ \le \sum_{i\in [m]}\Pra{ |\tilde{V}_i-npe^{-mp}|\ge \eps npe^{-mp} \text{ or }|V_i-np|\ge \eps np} \le  \eps^2m.
\end{multline}
Therefore by Markov's inequality $\Pra{|[m]\setminus \mathcal M_1|\ge \eps m}\le \eps,$ i.e.
\begin{equation}\label{Eq:M1upperbound}
	|\mathcal M_1|\ge (1-\eps)m\text{ with probability at least }1-\eps.
\end{equation}
Let us define a partition of the vertex set
$$
{\mathcal A} = \{\barV_i: i\in \mathcal M_1\}\cup \{\bar \V\},\text{ where } \bar \V=\V\setminus \bigcup_{i\in\mathcal M_1}\barV_i.
$$
Now set $G$, an instance of the random intersection graph $\Gnmp$ with properties 
\begin{equation}\label{Eq:GProperties}
	|\mathcal M_1|\ge (1-\eps)m\quad\text{ and }\quad|2e(G)-n^2mp^2|\le \eps n^2mp^2.
\end{equation}
Note that by  \eqref{Eq:eGconc} and \eqref{Eq:M1upperbound}, for any $\eps\in (0,1)$, $\Gnmp$ has the above stated properties  with probability at least $1-2\eps$.

First let us consider $S=\barV_i$, $i\in \mathcal M_1$. 
Note that by the definition of $\mathcal M_1$

\begin{equation*}\begin{split}
	2e(S)=2e(\barV_i)=\tilde V_i(\tilde V_i-1)
	&\ge (1-\eps)^2(npe^{-mp})^2\left(1-\frac{1}{(1-\eps)npe^{-mp}}\right)\\
	&\ge (1-\eps)^2(npe^{-mp})^2\left(1-\frac{1}{(1-\eps) A_\eps}\right)\\
	&\ge (1-\eps)^3(np)^2e^{-2mp}\\
	&\ge (1-\eps)^4\frac{2e(G)}{m}e^{-2mp}.
\end{split}\end{equation*}
The second last inequality follows as $(1-\eps) A_\eps>\eps^{-1}$ for all $\eps\in (0,0.8)$.
Moreover 
\begin{equation*}\begin{split}
	\vol (S) = \vol(\barV_i)
	= \tilde{V}_i(V_i-1)
	&\le (1+\eps)^2npe^{-mp}\cdot np \\
	&\le (1+\eps)^3\frac{\vol(G)}{m}e^{-mp}.
\end{split}\end{equation*}
Therefore, for $S=\barV_i$, $i\in \mathcal{M}_1$, and large enough  $m$ we get
\begin{equation}\label{Eq:Lower1}
	\begin{split}
		\frac{e(\barV_i)}{e(G)}-\left(\frac{\vol(\barV_i)}{\vol(G)}\right)^2&=\frac{e(S)}{e(G)}-\left(\frac{\vol(S)}{\vol(G)}\right)^2\\
		&\ge (1-\eps)^4\frac{e^{-2mp}}{m}-(1+\eps)^6\frac{e^{-2mp}}{m^2}
		\ge (1-5\eps)\frac{e^{-2mp}}{m}.
	\end{split}
\end{equation}
For $S=\bar \V$, using calculations for $\barV_i$, $i\in\mathcal M_i$, and $\eps<1/2$
\begin{equation*}
	\begin{split}
		e(\bar S)&=\sum_{i\in \mathcal M_1}e(\barV_i)\ge (1-\eps)m (1-\eps)^4\frac{e(G)e^{-2mp}}{m}\\
		&\ge (1-\eps)^5e(G)e^{-2mp}\ge (1-5\eps)e(G)e^{-2mp},\\
		(\vol(\bar S))^2
		&=\left(\sum_{i\in \mathcal M_1} \vol(\barV_i)\right)^2\\
		&\le (1+\eps)^6\vol^2(G)e^{-2mp}\le (1+21\eps)\vol^2(G)e^{-2mp}.
	\end{split}
\end{equation*}
Combining the above results with \eqref{Eq:ModularitySymmetry} we get for $\eps<1/2$ and large enough $m$
\begin{equation}\label{Eq:Lower2}
	\frac{e(\bar \V)}{e(G)}-\left(\frac{\vol(\bar \V)}{\vol(G)}\right)^2\ge - 26\eps e^{-2mp}.
\end{equation}
Recall that $\Gnmp$ with probability at least $1-2\eps$ has properties \eqref{Eq:GProperties}. As $G$ with properties \eqref{Eq:GProperties}
fulfil \eqref{Eq:Lower1} and \eqref{Eq:Lower2}, we get that  with probability at least $1-2\eps$  
\begin{multline*}
	\modul (\Gnmp)\ge \modul_{\mathcal A} (\Gnmp)\\
	\ge \sum_{i\in\mathcal M_i}(1-5\eps)\frac{e^{-2mp}}{m} - 26\eps e^{-2mp}\ge \left(1-31\eps\right) e^{-2mp}.
\end{multline*}

\section{Proof of Theorem \ref{Thm:Large_np_mp}}\label{Sec:Large_np_mp}
Recall the assumptions
\[
n^2mp^2\gg 1,\quad mp^2\ll 1,\quad m \gg n, \quad\text{ and }\quad np\gg \ln \frac{m}{n}.
\]
Set
\[
\eps=\eps_n=\sqrt{6\cdot \frac{\ln \left(\frac{3}{4}\cdot\frac{m}{n}\right)}{np}}=o(1).
\]
Set $S\subseteq \V$ such that $|S|=s\ge n/2$. Recall that and $X_i=X_{i,S}=|\V(w_i)\cap S|$, $i\in [m]$, is the number of vertices in $S$ that chose $w_i$. Moreover $X_i$  has the binomial distribution $\Bin{s}{p}$.
Let 
\[
\qq=\qq_s=\Pra{|X_i-sp|\ge \eps sp}.
\]
By Chernoff's inequality (see Lemma~\ref{lemma:Chernoff})
\[
\qq\le 2e^{-\frac{\eps^2}{3}sp}\le 2e^{-\frac{\eps^2}{6}np} \le 2e^{- \ln\left(\frac{3}{4}\cdot\frac{m}{n}\right)}=\frac{8}{3}\cdot\frac{n}{m}=:\delta.
\]
Let 
\[
\mathcal{M}_S=\{i\in [m]:\, |X_i-sp|\ge \eps sp\}, \quad M_S=|\mathcal{M}_S|.
\]
Then again by Chernoff's inequality (see Lemma~\ref{lemma:Chernoff})
\begin{multline*}
	\Pra{M_S\ge 2\delta m}
	\le \Pra{M_S\ge \qq m +\delta m} 
	\le \exp\left(-\frac{\delta^2m^2}{2\qq m + \frac23 \delta m}\right)\\
	\le \exp\left(-\frac38\delta m\right)=\exp(-n)=o(2^n).
\end{multline*}
Thus
\begin{equation}\label{Eq:MSduze}
	\Pra{\exists_{S,|S|\ge n/2} M_S\ge 2\delta m}
	\le \sum_{S,|S|\ge n/2}\Pra{M_S\ge 2\delta m}
	= o(1).
\end{equation}
In particular, when we set $S=\V$ we get that {\whp} $M_\V\le 2\delta m$. 
Moreover, again by Chernoff's inequality (Lemma~\ref{lemma:Chernoff}), for any $k\ge 5$
\[
\forall_{i\in [m]}\Pra{V_i\ge knp}\le \exp\left(-\frac{(k-1)^2}{2+2(k-1)/3}np\right) \le  e^{-2knp/3}.
\]
Set
\[
\mathcal{N}_k=\{i\in [m]: V_i\ge knp\},\quad N_k=|\mathcal{N}_k|,\quad k\ge 5.
\]
As $N_k$ has a binomial distribution with the expected value at most $me^{-2knp/3}$, by Markov's inequality, for $k\ge 5$,
\[
\Pra{N_k\ge me^{-knp/2}}\le  \frac{me^{-2knp/3}}{me^{-knp/2}}=e^{-knp/6}.
\]
Therefore, as $np\gg 1$,
\begin{equation}\label{Eq:Nkduze}
	\Pra{\exists_{k\ge 5} N_k\ge e^{-knp/2}m}\le \sum_{k=5}^{\infty}e^{-knp/6}=o(1).
\end{equation}
Let us define take an instance of $G$ such that 
\begin{multline}\label{Eq:DuzenpmpWlasnosci}
\forall_{S,|S|\ge n/2} M_S < 2\delta m, \quad
\forall_{k\ge 5} N_k < e^{-knp/2}m,\\
\text{and}\quad
e(\Gnmp)=n^2mp^2(1+O(\eps+\delta+\gamma)),\text{ where }\gamma=\omega mp^2.
\end{multline}
By \eqref{Eq:eGconc}, \eqref{Eq:MSduze}, and \eqref{Eq:Nkduze} we have that $\Gnmp$ has these properties. Then for $G$, an instance of $\Gnmp$, and for any $S$, $|S|=s\ge n/2$,
\begin{equation*}
	\begin{split}
		e(S)&=\sum_{i\in [m]}(X_i)_2\le\sum_{i\in [m]}X_i^2\\
		&\le
		\sum_{i\in [m]\setminus \mathcal{M}_S}(1+\eps)^2(sp)^2 
		+\sum_{i\in \mathcal{M}_S\setminus \mathcal{N}_5}(5np)^2
		+\sum_{k=6}^{\lfloor p^{-1}\rfloor}\sum_{i\in \mathcal{N}_{k-1}\setminus \mathcal{N}_{k}}(knp)^{2}\\
		&\le 
		m(sp)^2+3\eps m(sp)^2
		+2\delta m (5np)^2
		+\sum_{k=6}^{\infty}(knp)^2e^{-(k-1)np/2}m\\
		&\le s^2mp^2+O\left(\eps+\delta+(np)^2e^{-5np/2}\right)e(G)\\
		&= s^2mp^2+O\left(\left(\frac{\ln(m/n)}{np}\right)^{1/2}+\frac{n}{m}\right)e(G).
	\end{split}
\end{equation*}
And by \eqref{Eq:DefVolume}, as $np\eps\ge 1$,  
\begin{equation*}
	\begin{split}
		\vol(S)&\ge \sum_{i\in [m]}X_i(V_i-1)-E_1\\
		&\ge \sum_{i\in \mathcal{M}_S\cap\mathcal{M}_{\V}}(1-\eps)sp((1-\eps)np-1)-E_1\\
		&\ge \sum_{i\in \mathcal{M}_S\cap\mathcal{M}_{\V}}(1-\eps)sp((1-2\eps)np)-E_1\\
		&\ge \sum_{i\in \mathcal{M}_S\cap\mathcal{M}_{\V}}(1-3\eps)snp^2-E_1\\
		&\ge m(1-4\delta)(1-3\eps)snp^2-E_1\\
		&=snmp^2+O\left(\eps+\delta\right)e(G)-E_1\\
		&=snmp^2+O\left(\eps+\delta+\gamma\right)e(G).
	\end{split}
\end{equation*}
Combining the above estimates we get that for $G$ with properties \eqref{Eq:DuzenpmpWlasnosci}
\begin{equation*}
	\begin{split}
		&\frac{4e(S)e(G)-\vol^2(S)}{4e^2(G)}\\
		&=
		\frac{(s^2mp^2+O(\eps+\delta)e(G))\cdot n^2mp^2 (1+O(\eps+\delta+\gamma)) - (snmp^2+O(\eps+\delta)e(G))^2}{4e^2(G)}\\
		&=O\left(\eps+\delta+\gamma\right)\\
		&=O\left(\left(\frac{\ln(m/n)}{np}\right)^{1/2}+\frac{n}{m}+\omega mp^2\right),
	\end{split}
\end{equation*}
for any $\omega\to \infty$ arbitrarily slowly as $n\to\infty$.

\section{Proof of Theorem \ref{Thm:Smallnp}}\label{Sec:Smallnp}
Recall that we assume that $np=o(1)$, $mp^2=o(1)$, and $\dd=nmp^2\ge 1$.
Let us set $S$ such that $S=s\ge n/2$.
Recall that $V_i=|\V(w_i)|$ and $X_{i}=X_{i,S}$ have the binomial distributions $\Bin{n}{p}$ and $\Bin{s}{p}$, resp.. 
Moreover define $Y_{i}=X_{i,\bar S}\sim \Bin{n-s}{p}$, $i\in [m]$.  
By definition $X_1,\ldots,X_m,Y_1,\ldots,Y_m$ are all independent random variables.

We set a constant $A\ge 6$ and define auxiliary random variables
\begin{equation*}
	\tilde X_i = X_i\Ind_{X_i\le A}\quad \text{ and }\quad \tilde Y_i = Y_i\Ind_{Y_i\le A},\text{ for } i\in [m].   
\end{equation*}
For $l=1,2,3,4$ and $i\in [m]$
\begin{multline}\label{Eq:XiTildeExpectedValue}
	\E(\tilde X_i)_l=\E (X_i)_l\Ind_{X_i>A}\le \sum_{k=A}^{s}k^l\Pra{V_i\ge k}\le\sum_{k=A}^{s}k^l\binom{s}{k}p^k\le \sum_{k=A}^{s}k^l\left(\frac{esp}{k}\right)^k\\
	\le \sum_{k=A}^{s}\frac{(esp)^k}{k^{k-l}}
	\le A^l\left(\frac{esp}{A}\right)^A\frac{1}{1-esp}
	=(1+O(sp))A^l\left(\frac{esp}{A}\right)^A=O((sp)^A) 
\end{multline}
and similarly
\begin{equation*}
	\E(\tilde Y_i)_l=\E (Y_i)_l\Ind_{Y_i>A}=O(((n-s)p)^A). 
\end{equation*}
Let 
\begin{equation}\label{Eq:DefE2}
	E_2=\sum_{i\in [m]}(V_i)_2\Ind_{V_i>A}
\end{equation}
be the upper bound on number of edges contained in cliques of size larger than~$A$.
Substituting $s=n$, i.e. $S=\V$, to \eqref{Eq:XiTildeExpectedValue} we get
\begin{equation*}
	\E(E_2)=\E \left(\sum_{i\in [m]}(V_i)_2\Ind_{V_i>A}\right)= O\left(m(np)^A\right).
\end{equation*}
Therefore by Markov's inequality, for $B<A$
\begin{equation}\label{Eq:E2}
	\Pra{E_2\ge (np)^Bm}\le (np)^{A-B}=o(1).
\end{equation}

It is well known that
$\E (X_i)_l=(s)_lp^l$ and  $\E (Y_i)_l=(n-s)_lp^l$, $l=1,2,3,4$, $i\in [m]$. 
The observations made so far and standard calculations lead us to the following conclusions that for $A\ge 6$.

\begin{equation}	\label{Eq:ConditionalVariance}
		\begin{split}
		\E ((\tilde X_i)_2)&=s^2p^2-sp^2+O((np)^A),\\
		\E (\tilde X_i \tilde Y_i)&=s(n-s)p^2+O((np)^A),\\
		\var ((\tilde X_i)_2)&=2s^2p^2(1+O(np+n^{-1})),\\
		\var (\tilde X_i \tilde Y_i)&=s(n-s)p^2(1+O(np)).
	\end{split}
\end{equation}
We define two martingales, with the filtration $(\mathcal F_t)_{t\in [m]}$ defined as follows $\mathcal F_t=\sigma(\{X_i,Y_i:i\in [t]\})$
\begin{equation*}\begin{aligned}
		M_0&=M_0(S)=0, & M_t&=M_t(S)=\sum_{i=1}^{t}((\tilde X_i)_2-\E (\tilde X_i)_2),
		\\
		\bar M_0&=\bar  M_0(S)=0, &\bar M_t&=M_t(S)=\sum_{i=1}^{t}(\tilde X_i\tilde Y_i-\E (\tilde X_i\tilde Y_i)),
\end{aligned}\end{equation*}
for $t=1,2,\ldots,m$.
Then, by  \eqref{Eq:DefeS}, \eqref{Eq:DefeSbarS}, \eqref{Eq:E2},  {\whp}
\begin{equation}\label{Eq:eSVOLbyMartingale}
	\begin{split}
		2e(S)&\le \sum_{i=1}^{m}(\tilde X_i)_2 + E_2\\
		&=M_m+s^2mp^2+O((np)^B),
		\\
		\vol(S)&=2e(S)+e(S,\bar S)\\	 
		&\ge \sum_{i=1}^{m}(\tilde X_i)_2+\sum_{i=1}^{t}\tilde X_i\tilde Y_i - 2E_1\\
		&=M_m+s^2mp^2+\bar M_m+s(n-s)mp^2+O((np)^B)-2E_1\\
		&=M_m+\bar M_m+snmp^2 +O((np)^B)-2E_1.
\end{split}\end{equation}
Define  
\begin{equation*}
	\lambda_S=\eps_n snmp^2=\eps_n s\dd\quad\text{and}\quad \eps_n=\frac{3A^3}{\sqrt{\dd}},
\end{equation*}
where $\dd = nmp^2$.
We apply Lemma~\ref{lemma:Freedman} to both martingales. 
For $M_t$ by the definition of $\tilde X_i$ and \eqref{Eq:ConditionalVariance}
\begin{equation*}\begin{split}
		|M_i-M_{i-1}|&=|(\tilde X_i)_2-\E (\tilde X_i)_2|\le A^2;\\
		\var(M_i-M_{i-1}|\mathcal{F}_{i-1})&=\var((\tilde X_i)_2)\sim 2s^2p^2, \text{i.e.}\quad V\sim 2s^2mp^2.
\end{split}\end{equation*}
Then
\[
\frac{C\lambda_S}{3V}\le \frac{A^2\eps_n nsmp^2}{(1+o(1))6s^2mp^2}\le \frac12 A^2\eps_n, \text{ for }s\ge n/2.
\]
Then by Lemma~\ref{lemma:Freedman}, for $\dd\ge 1$, $A\ge 6$, and $s\ge n/2$, we have
\begin{equation*}\begin{split}
		\Pra{M_m\ge \lambda_S}&=\Pra{M_m\ge M_0 + \lambda_S}\\
		&\le \exp\left(-\frac{\eps_n^2s^2n^2m^2p^4}{4(1+o(1))s^2mp^2(1+A^2\eps_n/2)}\right)\\
		&\le \exp\left(-\frac{\eps_n^2\dd }{5(1+3A^5/2\sqrt{d})}n\right)\\
		&\le \exp\left(-\frac{9A^6}{5(1+3A^5/2)}n\right) \le \exp(-0.79n) = o(2^{-n}). 
\end{split}\end{equation*}
Moreover, as $(-M_t)_{t\in [m]}$ is also a martingale, in an analogous manner we get
\begin{equation*}\begin{split}
		\Pra{M_m\le -\lambda_S}&=\Pra{-M_m\ge -M_0 + \lambda_S}=o(2^{-n})
\end{split}\end{equation*}

Before we analyse the martingale $\bar M_t$, we consider the case $s\ge n-\eps_n n$. In this case 
$(n-s)smp^2\le \eps_n n smp^2\le \lambda_S$ and  trivially 
\begin{equation}\label{Eq:eSbarSduzeS}
	e(S,\bar S)\ge 0 \ge s(n-s)mp^2-\lambda_S, \text{ for }s\ge n-\eps_n n.
\end{equation}
Now we  use Lemma~\ref{lemma:Freedman} in the case $n/2\le s\le n-\eps_n n$. As before,  by \eqref{Eq:ConditionalVariance} for $\bar M_t$
\begin{equation*}\begin{split}
		|\bar M_i-\bar M_{i-1}|&=|\tilde X_i\tilde Y_i-\E \tilde X_i\tilde Y_i|\le A^2;\\
		\var(M_i-M_{i-1}|\mathcal{F}_{i-1})&=\var(\tilde X_i\tilde Y_i)\sim s(n-s)p^2, \text{i.e.}\quad V\sim s(n-s)mp^2.
\end{split}\end{equation*}
Then, for $n-s>\eps_nn$,
\[
\frac{C\lambda_S}{3V}\le \frac{A^2\eps_n nsmp^2}{(1+o(1))3s(n-s)mp^2}\le \frac12 A^2.
\]
Now, for $n/2\le s\le n-\eps_n n$, i.e. $n/(n-s)\ge 1$,
\begin{equation*}\begin{split}
		\Pra{\bar M_m\le -\lambda_S}&=\Pra{-\bar M_m\ge -\bar M_0 + \lambda_S}\
		\\
		&\le
		\exp\left(-\frac{\eps_n^2n^2s^2m^2p^4}{2(1+o(1))s(n-s)mp^2(1+A^2/2)}\right)\\
		&\le 
		\exp\left(-\frac{\eps_n^2\dd }{3(1+A^2/2)}n\right)\\
		&\le \exp\left(-\frac{3A^6}{(1+A^2/2)}\right)=o(2^{-n}). 
\end{split}\end{equation*}
Therefore
\begin{equation*}
	\Pra{\exists_{S, |S|\ge n/2}|M_S|\ge \lambda_S\text{ or }\bar M_S\le -\lambda_S}\le 2^n\cdot o(2^{-n})=o(1) 
\end{equation*}
and, as a consequence, {\whp}
\begin{equation}\label{Eq:MartingaleProperties}
	|M_S|\le \lambda_S \text{ and } \bar M_S\ge -\lambda_S.
\end{equation}
Let us take an instance $G$ of $\Gnmp$ with properties \eqref{Eq:MartingaleProperties} and \eqref{Eq:eGconc}. {\Whp} $\Gnmp$ has those properties.
Set $\lambda = \lambda_\V = \eps_n n^2mp^2=\eps_n n\dd\ge \lambda_S$. By \eqref{Eq:eSVOLbyMartingale} we have for $G$ and all $S\subseteq \V$
\begin{equation*}\begin{split}
		2e(S)&\le\lambda + s^2mp^2+O(\err),
		\\ 
		\vol S 
		&\ge -2\lambda +snmp^2 - O(\err),
\end{split}\end{equation*}
where we use a shorthand notation $\err:=(np)^B+\omega n^2m^2p^4$. 

By definition $\lambda\asymp n \sqrt{\dd} = n^{3/2}m^{1/2}p$ and $1\le \dd = nmp^2=o(n)$. Then, for $\omega$ tending slowly to infinity
\begin{equation*}
	nmp^2=\dd=o(\lambda),\quad
	\omega nm^{1/2}p=o(\lambda),\quad \text{and} \quad n^2m^2p^4\le \err.
\end{equation*}
Therefore by \eqref{Eq:eGconc} we have
\begin{equation}\label{Eq:eGconc2}
	2e(G)=n^2mp^2 + o(\lambda) + O(\err). 
\end{equation}
We recall that $\lambda = n^{3/2}m^{1/2}p = n^2mp^2/\sqrt{\dd}\le n^2mp^2$. Moreover $\err=o(n^2mp^2)$. Before we substitute the above estimates to the formula related to the modularity, we perform some calculations
\begin{equation*}\begin{split}
		2e(S)\cdot 2e(G)
		&\le (s^2mp^2+\lambda+O(\err))(n^2mp^2+o(\lambda)+O(\err))\\
		&= s^2n^2m^2p^4+\lambda(1+o(1)) n^2mp^2 +n^2mp^2 O(\err),\\
		(\vol S)^2&\ge
		(snmp^2-\lambda-O(\err))^2\\
		&=s^2n^2m^2p^4  + \lambda^2  - 2\lambda snmp^2 + (snmp^2+\lambda) O(\err)\\
		&\ge  s^2n^2m^2p^4 -  2\lambda n^2mp^2 - n^2mp^2O(\err).
\end{split}\end{equation*}
Thus we get for all $S\subseteq \V$
\begin{equation*}\begin{split}
		\frac{e(S)}{e(G)}-\frac{(\vol S)^2}{(\vol G)^2}
		&=
		\frac{2e(S)\cdot 2e(G)-(\vol S)^2}{4e(G)}\\
		&=(1+o(1))\frac{(1+o(1))3\lambda n^2mp^2 + n^2mp^2 O(\err)}{n^4m^2p^4}\\
		&=(1+o(1))\frac{9A^3}{\sqrt{\dd}}+O\left((np)^{B-2}+\omega mp^3\right).
\end{split}\end{equation*}
This combined with Corollary~\ref{corollary:OnlySbig} gives the statement of Theorem~\ref{Thm:Smallnp}. 

The proof of Corollary~\ref{Cor:Smallnp} is straight forward. It is enough to notice that $\dd=nmp^2\ll n^{2/3}$. Therefore $C/\sqrt{\dd}\gg n^{-1/3}$. On the other hand $\omega mp^2\ll n^{-1/3}$ for $\omega$ tending to infinity slowly enough. Moreover, when we set $A\ge 2$ then $(np)^A\le mp^2\ll n^{-1/3}$. Therefore $(np)^A+\omega mp^2 = o (1/\sqrt{\dd})$.

\section{Proof of Theorem \ref{Thm:LikeGnp}}\label{Sec:LikeGnp}

Let us take a random graph $\hatGnmp$ with the vertex set $\V$ and the edge set constructed in the following manner.  For each $i$ such that $V_i\ge 2$ in $\Gnmp$, we choose uniformly at random two vertices from $\V(w_i)$, in $\Gnmp$, and join them by an edge. This definition gives us a coupling of random graphs $(\Gnmp,\hatGnmp)$ such that with probability one 
$$
\hatGnmp\subseteq \Gnmp.
$$
The graph $\hatGnmp$ is very close to Erd\H{o}s-R\'enyi random graph  $\Gnp$ with 
$$
\barp = 1-e^{-m\hat q}, \text{ where }\hat{q}=1-(1-p)^n-np(1-p)^{n-1}.
$$ 
Namely
\begin{lemma}\label{Lem:Equivalence}
	$$
	{\rm d}_{TV}(G(n,\bar q),\hatGnmp)=o(1).
	$$
\end{lemma}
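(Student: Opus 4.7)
The plan is a short Poissonisation coupling. The first observation is one of symmetry: for each $i\in[m]$, by exchangeability of $\V(w_i)$ under permutations of $\V$, the conditional distribution of the uniformly chosen pair from $\V(w_i)$ given $V_i\ge 2$ is uniform on $\binom{\V}{2}$. Indeed, for any $\{u,v\}\in\binom{\V}{2}$,
\[
\Pra{\text{pair}=\{u,v\}\mid V_i\ge 2}
=\frac{1}{\Pra{V_i\ge 2}}\sum_{S\ni u,v}\Pra{\V(w_i)=S}\frac{1}{\binom{|S|}{2}}
\]
is manifestly independent of $\{u,v\}$ and hence equals $1/\binom{n}{2}$. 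Consequently $\hatGnmp$ can be realised as the simple graph spanned by $\Pi$ independent uniform pairs $P_1,\dots,P_\Pi$ drawn from $\binom{\V}{2}$, where $\Pi=\sum_{i=1}^{m}\Ind_{V_i\ge 2}$ has distribution $\Bin{m}{\hat q}$.

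Next I Poissonise. Let $\Pi'\sim\Po{m\hat q}$ be independent of the i.i.d.\ sequence $(P_j)_{j\ge 1}$ and let $\hatG_P$ be the simple graph spanned by $P_1,\dots,P_{\Pi'}$. By Poisson thinning, the multiplicities of the distinct pairs are independent $\Po{m\hat q/\binom{n}{2}}$ random variables, so the edges of $\hatG_P$ are mutually independent with common marginal probability $1-e^{-m\hat q/\binom{n}{2}}=\barp$. Therefore $\hatG_P$ has exactly the distribution of $G(n,\barp)$.

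To conclude, I reuse the same sequence $(P_j)$ in both constructions and apply the maximal coupling of $\Pi$ and $\Pi'$. Under this joint coupling $\hatGnmp$ and $\hatG_P$ coincide on $\{\Pi=\Pi'\}$, and so
\[
{\rm d}_{TV}\bigl(\hatGnmp,G(n,\barp)\bigr)
\le {\rm d}_{TV}\bigl(\Bin{m}{\hat q},\Po{m\hat q}\bigr)\le \hat q,
\]
by the sharp form of Le Cam's inequality, ${\rm d}_{TV}(\Bin{m}{q},\Po{mq})\le \min(mq^2,q)$. Since $np=o(1)$,
\[
\hat q=1-(1-p)^n-np(1-p)^{n-1}\sim \tfrac12 n(n-1)p^2=O\bigl((np)^2\bigr)=o(1),
\]
which yields the desired bound.

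The only place that requires genuine care is the symmetry calculation in the first step, which turns the combinatorial description of $\hatGnmp$ (clique-selection inside a size-biased random subset) into the much simpler description as a union of i.i.d.\ uniform pairs; once this reduction is made, the Poissonisation coupling is routine. Using the sharper Le~Cam bound $\hat q$ rather than the more commonly cited $m\hat q^2$ is essential here, because $m\hat q^2\asymp n^2(np)^2\,mp^2$ need not tend to $0$ in the regime of Theorem~\ref{Thm:LikeGnp}, whereas $\hat q=O((np)^2)$ is automatic from $np=o(1)$.
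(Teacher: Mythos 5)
Your proof is correct and follows essentially the same route as the paper: both represent $\hatGnmp$ as a $\Bin{m}{\hat q}$ number of i.i.d.\ uniform pairs, Poissonise to obtain $G(n,\barp)$ exactly, and bound the total variation distance by ${\rm d}_{TV}(\Bin{m}{\hat q},\Po{m\hat q})\le \hat q=O((np)^2)$. The only difference is presentational: the paper outsources the symmetry and Poissonisation steps to Lemmas~5 and~6 of \cite{Rybarczyk2011Equivalence}, whereas you write them out explicitly (your ``sharp Le Cam'' bound is the Barbour--Hall/Stein--Chen estimate, and your observation that the weaker bound $m\hat q^2$ would not suffice here is accurate).
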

\begin{proof}
The proof relays on the same ideas as those used for example in lemmas~5 and~6 form \cite{Rybarczyk2011Equivalence}. 
Let 
$$
\hat E = |\{i\in [m]: V_i\ge 2\}|.
$$
Therefore
$$
\hat E \sim \Bin{m}{\hat{q}}, \quad \text{ for } \hat{q}=1-(1-p)^n-np(1-p)^{n-1}=\binom{n}{2}p^2(1+O(np)).
$$
Let us define also a random variable  with the Poisson distribution
$
\bar E \sim \Po{m\hat{q}}.
$ and let $\barGnmp$ be the graph on the vertex set $\V$ and with the edge set obtained by sampling with repetition $\bar E$ times edges from all two element subsets of $\V$ and merging multiple edges into one. A straight forward calculation shows that each edge in $\barGnmp$ appears independently with probability    
$$
\barp = 1-e^{-m\hat q/\binom{n}{2}}.
$$
Therefore repeating the reasoning from lemmas~5 and ~6 from~\cite{Rybarczyk2011Equivalence} we get
\begin{multline*}
	{\rm d}_{TV}(\Gnp,\hatGnmp)={\rm d}_{TV}(\barGnmp,\hatGnmp)\\
	\le
2{\rm d}_{TV}(\bar E,\hat E)\le \hat q = O((np)^2)=o(1).
\end{multline*}
Which ends the proof of the lemma.
\end{proof}

Now let us consider the probability space of the coupling $(\Gnmp,\hatGnmp)$ described above. Define a random variable
\begin{equation*} 
	\Delta=|E(\Gnmp)|-|E(\hatGnmp)|\le \sum_{i\in [m]} \binom{V_i}{2}-\Ind_{V_i\ge 2}.
\end{equation*}
Therefore, using inequality $(1-x)^n\le 1-nx+\binom{n}{2}x^2$, for $x\in (0,1)$

\begin{equation*}
\begin{split}
	\E \Delta 
	&= m\frac{(n)_2p}{2} - m(1-(1-p)^{n}-np(1-p)^{n-1})\le \frac{m(np)^3}{2}.
\end{split}
\end{equation*}
Thus by Markov's inequality, for any $\delta>0$, {\whp}
\begin{equation*}
	\Delta\le m(np)^{3-\delta}.
\end{equation*} 
Therefore,  for any $\delta>0$, {\whp}, for all sets $S\subseteq \V$ 
\begin{equation*}
\begin{split}
e_{\hatG}(S)&\le e_{\G}(S)\le e_{\hatG}(S)+m(np)^{3-\delta};\\
\vol_{\hatG}(S)&\le \vol_{\G}(S)\le \vol_{\hatG}(S)+2  m(np)^{3-\delta},
\end{split}
\end{equation*}
where we use a shorthand notation $\G=\Gnmp$ and $\hatG=\hatGnmp$.
In particular we have
$$
2e(\G)=\vol(\hatG)(1+O((np)^{1-\delta}))
$$
Therefore, for any $S\subseteq \V$, we get 
\begin{equation*}
	\begin{split}
		4e_{\G}(S)e(\G)-\vol^2_{\G}(S)
		&=
		4e_{\hatG}(S)e(\hatG)-\vol^2_{\hatG}(S)+O(m(np)^{3-\delta}\vol(\hatG))\\
		&=4e_{\hatG}(S)e(\hatG)-\vol^2_{\hatG}(S)+O((np)^{1-\delta})\vol^2(\hatG)		
	\end{split}
\end{equation*}
This implies that  {\whp}
 for any $S$
\begin{equation*}
\begin{split}
		\frac{e_{\G}(S)}{e(\G)}-\frac{(\vol_{\G} (S))^2}{(\vol (\G))^2}
		&=
		\frac
		{4e_{\G}(S)e(\G)-\vol^2_{\G}(S)}
		{(\vol (\G))^2}
		\\
		&=
		\frac
		{4e_{\hatG}(S)e(\hatG)-\vol^2_{\hatG}(S)+O((np)^{1-\delta})\vol^2(\hatG)}
		{\vol^2(\hatG)(1+O((np)^{1-\delta}))}
		\\
		&=\frac{e_{\hatG}(S)}{e(\hatG)}-\frac{(\vol_{\hatG} (S))^2}{\vol^2(\hatG)}+O((np)^{1-\delta}).
\end{split}
\end{equation*}
Therefore, for any partition $\mathcal{A}$ of $\V$, such that $|\mathcal{A}|\le k_n=\lfloor(np)^{-\delta}\rfloor$, {\whp}
\begin{equation*}	
\begin{split}
\modul_{\mathcal{A}}(\G)
&=\sum_{S\in \mathcal{A}}\left[\frac{e_{\G}(S)}{e(\G)}-\frac{(\vol_{\G} (S))^2}{(\vol (\G))^2}\right]\\
&=\sum_{S\in \mathcal{A}}\left[\frac{e_{\hatG}(S)}{e(\hatG)}-\frac{(\vol_{\hatG} (S))^2}{\vol^2(\hatG)}+O((np)^{1-\delta}).\right]\\
&=\modul_{\mathcal A}(\hatG) + O((np)^{1-2\delta}).
\end{split}\end{equation*}
Thus by Lemma~\ref{lemma:only2parts} {\whp}
\begin{equation*}
\begin{split}
\modul(\G) &\leq \frac{k_n}{k_n-1} \max_{\mathcal{A}: |\mathcal{A}| \leq k_n} \modul_{\mathcal{A}}(\G)\\
&=(1+o(1))\max_{\mathcal{A}: |\mathcal{A}| \leq k_n} \modul_{\mathcal{A}}(\hatG)+O((np)^{1-2\delta})\\
&\le (1+o(1))\modul(\hatG)+O((np)^{1-2\delta}).
\end{split}\end{equation*} 
In the same manner we get that  {\whp}
\begin{equation*}
\begin{split}
\frac{k_n-1}{k_n}\modul(\hatG)
&\le \max_{\mathcal{A}: |\mathcal{A}| \leq k_n} \modul_{\mathcal{A}}(\hatG)\\
&=\max_{\mathcal{A}: |\mathcal{A}| \leq k_n} \modul_{\mathcal{A}}(\G)+O((np)^{1-2\delta})\\
&\le \modul(\G)+O((np)^{1-2\delta}).
\end{split}\end{equation*}
Thus {\whp}
$$
\modul(\G)=(1+o(1))\modul(\hatG)+O((np)^{1-2\delta})
$$
This combined with Lemma~\ref{Lem:Equivalence} gives the result. 

\section{Concluding remarks}

We studied modularity of the classical random intersection graph introduced in \cite{KaronskiSchienerSinger1999Subgraph}. We showed that, when each vertex of the network does not have too many attributes and each attribute is possessed by many vertices, then there is an apparent community structure in the network detected by the modularity. We have also given an example in which there are still large communities related to common attributes, however the number of attributes of each vertex is large. In this case the community structure is no longer detected by the modularity, i.e. the modularity tends to $0$ as $n\to\infty$. It would be interesting to understand the behaviour of the random intersection graphs in the case when both $np$ and $mp$ tends to infinity, but $m=o(n)$. This case is not covered by the above mentioned results. We also studied a relation between the modularity of $\Gnmp$ and classical Erd\H{o}s--R\'eny random graph with independent edges.   

We think that these results give an insight into the relation between community structure of networks based on a bipartite graph, i.e. affiliation networks. We studied only the most classical random intersection graph model $\Gnmp$. We hope that the research will follow and more interesting models, form the point of view of applications, will be considered. Here we have in mind such models of random intersection graphs that not only have large clustering but also have power law degree distribution and possibly more properties of complex networks. 

%
% ---- Bibliography ----
%
% BibTeX users should specify bibliography style 'splncs04'.
% References will then be sorted and formatted in the correct style.
%
%\bibliographystyle{splncs04}
\bibliographystyle{plain}
%\bibliography{BibModularity}
%

\end{document}